\newcommand{\into}{\hookrightarrow}
\newcommand{\onto}{\twoheadrightarrow}
\newcommand{\C}{\mathbb{C}}
\newcommand{\R}{\mathbb{R}}
\newcommand{\Z}{\mathbb{Z}}
\newcommand*{\abs}[1]{\lvert#1\rvert}
\newcommand{\F}{\mathbb{F}}
\newcommand{\T}{\mathbb{T}}
\newcommand{\That}{\widehat{\Z}}
\newcommand{\Zhat}{\widehat{\Z}}
\newcommand{\LL}{\mathcal{L}}
\newcommand{\Bound}{\LL}
\newcommand{\K}{\mathrm{K}}
\newcommand{\KK}{\mathrm{KK}}
\newcommand{\PD}{\textup{PD}}
\newcommand{\dol}{\overline{\partial}}
\newcommand{\FM}{\mathcal{F}}
\newcommand{\corl}[1]{\overset{#1}{\longleftarrow}}
\newcommand{\corr}[1]{\overset{#1}{\longrightarrow}}
\newcommand{\hotimes}{\hat{\otimes}}
\newcommand{\PDspin}{\mathrm{PD}_{\mathrm{spin}}}
\newcommand{\PDfm}{\mathrm{PD}_{\textup{B-C}}}
\newcommand{\poincare}{\mathcal{P}}
\newcommand{\Deltafm}{\Delta_{\textup{B-C}}}
\newcommand{\Deltaspin}{\Delta_{\textup{Spin}}}
\newcommand{\Dudeltaspin}{\widehat{\Delta}_{\textup{Spin}}}
\newcommand{\Dudeltafm}{\widehat{\Delta}_{\textup{B-C}}}
\newcommand{\Deltads}{\widehat{\Delta}_{\textup{DS}}}
\newcommand{\derham}{D_{\textup{dR}}}
\newcommand{\pnt}{\cdot}
\newcommand{\pr}{\textup{pr}}
\newcommand{\id}{\mathrm{id}}
\newcommand*{\norm}[1]{\lVert#1\rVert}
\numberwithin{equation}{section}
\numberwithin{equation}{section}
\theoremstyle{theorem}
\newtheorem{theorem}[equation]{Theorem}
\newtheorem{lemma}[equation]{Lemma}
\newtheorem{proposition}[equation]{Proposition}
\newtheorem{corollary}[equation]{Corollary}
\theoremstyle{definition}
\newtheorem{definition}[equation]{Definition}
\theoremstyle{remark}
\newtheorem{remark}[equation]{Remark}
\newtheorem{example}[equation]{Example}
\begin{document}
\title{Baum-Connes and the Fourier-Mukai transform}


\author{Heath Emerson}
\email{hemerson@math.uvic.ca}
\address{Department of Mathematics and Statistics\\
  University of Victoria\\
  PO BOX 3045 STN CSC\\
  Victoria, B.C.\\
  Canada V8W 3P4}

\author{Dan Hudson}
\email{dhudson@math.toronto.edu}
\address{Department of Mathematics\\
	University of Toronto\\
	40 St. George Street\\
	Toronto, ON\\
	Canada M5S 2E4}

\keywords{K-theory, K-homology, equivariant KK-theory, Baum-Connes conjecture, Noncommutative Geometry}

\date{\today}

\thanks{This research was supported by an NSERC Discovery grant and an NSERC Alexander Graham Bell CGS-M. }

\begin{abstract}

The Fourier-Mukai transform from algebraic geometry may be formulated 
in KK-theory as the map of composition with a certain topological correspondence 
in the sense of Connes and Skandalis. The goal of this note is to analyze this correspondence 
and to describe the induced map in terms of certain natural Baum-Douglas cycles and 
co-cycles for tori. This leads to a purely geometric description of the 
Baum-Connes assembly map for free abelian groups. 

\end{abstract}

\maketitle


\section{Introduction}
A \emph{topological correspondence} between smooth manifolds \(X\) and \(Y\), a concept due to Connes and Skandalis \cite{Connes-Skandalis}, is the content of a diagram 
	\begin{equation}
		\label{equation:correspondence}
			X \xleftarrow{b} (M, \xi) \xrightarrow{f} Y,
	\end{equation}
where \(b\) is a smooth map, \(f\) a \(\K\)-oriented smooth map, and \(\xi\) is a K-theory class for \(M\).  A correspondence determines an element of 
	\[ \KK_* (C_0(X), C_0(Y)),\]
since \(b\) determines an element by ordinary functoriality \(b^*\in \KK_0(C_0(M), C_0(X))\), \(f\colon M \to Y\) a wrong way, or `shriek' morphism \(f! \in \KK_{\dim Y - \dim M}\bigl(C_0(M), C_0(Y)\bigr)\), and the \(\K\)-theory datum can be integrated as a `twist,' using the ring structure on topological K-theory. The references \cite{EM:Embeddings} and \cite{EM:Geometric_KK} develop a geometric module for \(\KK\) for manifolds, using equivalence classes of correspondences. 

The correspondence concept is very closely related to one in algebraic geometry, sometimes called a \emph{Fourier-Mukai transform}. If \(X\) and \(Y\) are smooth projective varieties, then a suitable object \(\mathcal{E}\) in the derived category \(\mathcal{D}^b(X\times Y)\) of sheaves over \(X\times Y\), gives rise to a transformation 
	\begin{equation}
		\mathcal{D}^b (X) \xrightarrow{ (Rp_X)^*} \mathcal{D}^b(X\times Y)
		\xrightarrow{\cdot \otimes^L \mathcal{E}} \mathcal{D}^b(X\times Y)\xrightarrow{ (Rp_Y)_*} \mathcal{D}^b(Y).
	\end{equation}
between derived categories of sheaves, where the maps \( (Rp_X)_*\) the derived inverse image functor and \((Rp_Y)_*\) the derived direct image functor, for the coordinate projections \(p_X, p_Y\). For example, if \(X = Y\), the structure sheaf of the diagonal \(\Delta \subset X\times X\) can be used, and the induced map is the identity. Mukai proved that if \(T\) was an abelian variety, \(\hat{T}\) its dual, and \(\mathcal{E}\) the structure sheaf of sections of the \emph{Poincar\'e bundle} \(\poincare\) then the resulting transform 
	\[\FM \colon \mathcal{D}^b(T) \to \mathcal{D}^b(\hat{T})\]
is an isomorphism, with inverse the map induced by the dual Fourier-Mukai transform obtained by flipping \(T\) and \(\hat{T}\). The isomorphism is called \emph{Fourier-Mukai duality}. It is an instance of a \emph{\(T\)-duality} in physics, relating two string theories with different space-time geometries. 

If \(T^d = \R^d/\Z^d\) is the torus, the \emph{Poincar\'e bundle} \(\poincare_d\) is the complex line bundle over \(T^d\times \That^d\) given by 
	\[ \poincare_d=\R^d\times \That^d \times_{\Z^d}\C\; / \; \sim\]
where \(\sim\) identifies 
	\[(x,\chi,\lambda) \sim (x+n,\chi, \chi(n)\lambda), \quad \text{for $n\in \Z^d$,}\]
and the bundle projection is induced by the coordinate projection \(\R^d\times \That^d\times \C\to \R^d\times \That^d\). 

The \emph{Fourier-Mukai correspondence} is the correspondence 
	\[\T^d \corl{\pr_1} (\T^d\times \That^d, \poincare_d) \corr{\pr_2} \That^d, \]
where \(\poincare_d\) is the class of the Poincar\'e line bundle and \(\pr_2\) has a canonical K-orientation. The Fourier-Mukai correspondence defines a class 
	\[[\FM_d]\in \KK_{-d} (\T^d, \That^d)\]
in geometric KK, and in analytic KK. 

It has been well known to some experts for a while
 that the Fourier-Mukai morphism is closely 
 related to the Baum-Connes map for free abelian groups, although it seems it has 
 not been written down anywhere, until \cite{Emerson:Fibre}. Let
\([D]\) be the class of the \(\Z^d\)-equivariant Dirac operator on \(\R^d\) (the Dirac morphism for 
\(\Z^n\) in the sense of \cite{Meyer-Nest}) ). We apply the following map to it. 
We first apply Kasparov's descent map 
	\[j\colon \KK^{\Z^d}_* ( C_0(\R^d), \C) \to \KK_{-d} \left(C(\T^d), C^*(\Z^d) \right) \cong \KK_{-d} \left( C(\T^d), C(\That^d) \right)\]
and Fourier transform. We then compose with the class of the standard 
Morita equivalence bimodule, in 
\(\KK_0(C(\T^d), C_0(\R^d)\rtimes \Z^d)\). Then the composition of these maps 
sends \([D] \in \KK_{-d}(C_0(\R^d), \C)\) to the class of the Fourier-Mukai correspondence. 
The appearance of the Poincar\'e bundle in the correspondence is due to the 
 the Morita equivalence bimodule, which has a twisting effect.

In this note we describe the
 Baum-Connes assembly map geometrically using the 
Fourier-Mukai correspondence and the various geometric tools available in the correspondence theory, 
and a natural parameterization of the K-homology of tori in terms of classes of oriented  sub-tori. 

An \emph{oriented} subtorus \(T = V/\Gamma\) in \(\T^d\) is a torus subgroup of \(\T^d\) together with a K-orientation on it; this is equivalent to an orientation on \(V\). An oriented subtorus gives a correspondence \( \T^d\xleftarrow{i} T\rightarrow \pnt\) (\(i\) the inclusion) and class \( [(T,i)]_*\in \KK_{-\dim T} (\T^d, \pnt)\). On the other hand, the inclusion \(i\) is a K-oriented embedding so \( \pnt \leftarrow T \xrightarrow{i} \T^d\) gives a K-theory counterpart \([(T,i)]!\in \KK_{\dim T}. (\pnt, \T^d)\). 

If \(i \colon T \to \T^d\) is an oriented subtorus where \( T = V/\Gamma\), \(V\) oriented, then \(\widehat{V}\) has an induced K-orientation and and so does \(\widehat{\Gamma}\). The exact sequence 
	\[0 \corr{ } \ker(\hat{i}) \corr{ } \Zhat^d \corr{\hat{i}} \widehat{\Gamma} \corr{} 0, \]
endows $\ker(\hat{i})$ with a K-orientation. The subtorus \(\hat{T} := \ker (\hat{i}) \subset \That^d\) is called the \emph{dual subtorus}. We prove the following. 

	\begin{theorem}
		Let \(T \subset \T^d\) be a K-oriented subtorus of dimension \(k\) defining the element \([(T,i)]_* \in \K_{j} (\T^d)\), and \(\hat{T}\) the dual torus. Then 
			\[\mu ([(T,i)]_*) =  (-1)^{k(d-k)+\frac{k(k-1)}{2}}\cdot [\widehat{(T,i)}]! \in \K^{-j} (\That^d),\]
		where \(\mu\) is the Baum-Connes assembly map. 
	\end{theorem}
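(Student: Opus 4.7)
The plan is to realise $\mu$ as the right Kasparov product with the Fourier--Mukai class $[\FM_d] \in \KK_{-d}(\T^d, \That^d)$, as described in the introduction, and to compute this product geometrically for the subtorus class via the Connes--Skandalis intersection of correspondences. Applying Poincar\'e duality on $\T^d$, I first realise $[(T,i)]_*$ as $[(T,i)]! \in \K^{d-j}(\T^d)$, represented by the correspondence $\pnt \leftarrow T \xrightarrow{i} \T^d$; here $i$ is K-oriented by combining the given K-orientation on $T$ with the canonical trivialisation of the normal bundle $\R^d/V$.

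The Connes--Skandalis product of $[(T,i)]!$ with the Fourier--Mukai correspondence is then computed as a fibre product. Composing $\pnt \leftarrow T \xrightarrow{i} \T^d$ with $\T^d \xleftarrow{\pr_1} (\T^d \times \That^d, \poincare_d) \xrightarrow{\pr_2} \That^d$ produces
\[ \pnt \leftarrow (T \times \That^d, \mathcal{L}) \xrightarrow{\pr_2} \That^d, \]
since $T \times_{\T^d} (\T^d \times \That^d) \cong T \times \That^d$ via $(t,\chi) \mapsto (t, i(t), \chi)$, with the K-theory twist $\mathcal{L} := (i \times \id)^* \poincare_d$.

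The crucial geometric fact is that $\mathcal{L}$ is trivial exactly over $T \times \hat T \subset T \times \That^d$: the restriction of $\poincare_d$ to $T \times \{\chi\}$ is the flat line bundle on $T$ with holonomy $\chi|_\Gamma$, which is trivial precisely when $\chi \in \ker(\hat i) = \hat T$. The family Dirac index of $\mathcal{L}$ along the $T$-fibres is therefore concentrated on $\hat T \subset \That^d$. A Koszul / Bott-class computation---or equivalently a bordism together with a vector-bundle modification in the correspondence calculus---identifies $(\pr_2)_!(\mathcal{L})$ with the class of the K-oriented embedding $\hat T \hookrightarrow \That^d$, that is, with $[\widehat{(T,i)}]!$ up to a sign.

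The sign $(-1)^{k(d-k) + k(k-1)/2}$ I would track through a careful K-orientation comparison. The factor $k(d-k)$ comes from reordering tangent and normal directions when matching the product K-orientation on $T \times \That^d$ against the induced K-orientations on $T$, $\hat T$, and the normal bundle $N_{\hat T / \That^d}$. The factor $k(k-1)/2$ originates from the Koszul sign in the identification $\Lambda^{\mathrm{top}} \hat V \cong (\Lambda^{\mathrm{top}} V)^*$, i.e.\ the way the K-orientation on $V$ induces one on $\hat V$ (and hence on $\hat T$) through the exact dual-lattice sequence in the statement. The main obstacle will be precisely this sign accounting: every K-orientation choice---on $T \to \pnt$, on $\pr_2$ in the Fourier--Mukai correspondence, on the fibre product, and on the normal bundle---must be fixed consistently and matched against the K-orientation on $\hat T$ prescribed by the theorem.
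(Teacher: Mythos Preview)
Your approach is essentially the paper's: factor $\mu$ through spin Poincar\'e duality and the Fourier--Mukai class, compute the fibre product to get $(T\times\That^d,(i\times\id)^*\poincare_d)\xrightarrow{\pr_2}\That^d$, then simplify via Bott periodicity. The paper's concrete mechanism for your ``Koszul/Bott-class computation'' is to choose a splitting $\That^d\cong\widehat T\times\widehat\Gamma$ so that the pulled-back Poincar\'e bundle factors as $\poincare_{T\times\widehat\Gamma}\otimes\mathbf{1}$, and then apply an explicit one-dimensional lemma $[(\T\times\Zhat,\poincare_1)\to\Zhat]=[(\pnt,\mathbf{1})\to\Zhat]$ iteratively; the sign $(-1)^{k(k-1)/2}$ arises from the reordering $\T^k\times\Zhat^k\to(\T\times\Zhat)^k$ needed to iterate this, not from the identification $\Lambda^{\mathrm{top}}\widehat V\cong(\Lambda^{\mathrm{top}}V)^*$ as you suggest.
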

We use the Theorem to deduce the Fourier-Mukai inversion formula in K-theory. 

The nice properties of the Fourier-Mukai transform in \emph{complex} KK-theory, are due to Bott Periodicity. In terms of correspondences, the correspondence 
	\[\T \leftarrow (\pnt, \mathbf{1}) \rightarrow \pnt,\]
is equivalent in the correspondence framework to 
	\[\T \corl{\pr_1} (\T\times \Zhat, \poincare_1) \corr{ } \pnt.\]
The two correspondences are \emph{Bott equivalent} (or \emph{Thom} equivalent). This kind of equivalence is built into the correspondence framework to make it Bott Periodic. In going from the second correspondence to the first, the dimension of the middle space has been reduced by \(2\). The proof of the Theorem amounts to repeated application of the method of simplifying the middle space of a correspondence, using the Poincar\'e bundle and Bott equivalence. 

The Baum-Connes assembly map for \(\Z^d\), building in Fourier-transform \(C^*(\Z^d) \cong C(\That^d)\), factors through spin duality for a torus, and the Fourier-Mukai transform: 
	\[\xymatrix{  \K_*(\T^d)  \ar[r]^{\PDspin \;\;\;\;} \ar[dr]_{\mu} & \K^{*+d} (\T^d) \ar[d]^{(\FM_d)_*} \\  & \K^*(\That^d) }.\]
The assembly map \(\mu\) is analytically defined. It associates to the K-homology class of an oriented subtorus a certain families index problem: the familes index is a K-theory class for \(\That^d\). But the previous theorem implies that \(\mu ([T]) = [\hat{T}]!\in \K^* (\That^d)\), up to a specified sign. 

This gives therefore the promised geometric description of the assembly map for free abelian 
groups. 

\section{Topological Bivariant K-Theory}

The theory of topological correspondences goes back to Connes and Skandalis and as developed further in \cite{EM:Geometric_KK}, in particular, shown to form a category naturally isomorphic to Kasparov's \(\KK\), when the arguments are smooth manifolds. The fact that the topological correspondences (of \cite{EM:Geometric_KK}) map to the analytically defined ones of \cite{Connes-Skandalis} is a very general way of stating the Index Theorem of Atiyah and Singer. 

\subsection{Topological Correspondences}

In this paper we will be operating in the environment of of this topological picture of \(\KK\)-theory and so give a quick summary of it before proceeding to the application. For the purposes of this paper we will always take $X$ to be compact, and adjust the definitions accordingly. For a more in depth account of the theory, see~\cite{EM:Geometric_KK}. 

\begin{definition}
	A \textit{smooth correspondence} (or just correspondence) between smooth manifolds $X$ and $Y$ (with $X$ compact) is a quadruple $\Phi = (M, b, f, \xi)$, often depicted as a diagram
		\[\Phi = X\corl{b}(M,\xi)\corr{f}Y, \]
	where
		\begin{itemize}
			\item $M$ is a smooth manifold, 
			\item  $\xi \in \K^*(M)$,
			\item $b:M\to X$ is a smooth map, and
			\item $f:M\to Y$ is smooth and K-oriented: that is, \(T_f:= TM \oplus f^*(TY)\) is a \(\K\)-oriented (real) vector bundle. 
		\end{itemize}
	We sometimes refer to $f$ as the \textit{forward map} and $b$ the \textit{backward map} of $\Phi$. The \emph{sum} of two correspondences is defined by their disjoint union. The \emph{degree} of a correspondence is defined to be $\dim Y + \deg \xi-\dim M$ if this locally constant function is constant; any correspondence is the sum of correspondences with well defined degrees. 
\end{definition}

\begin{remark}
\label{remark:OrientationsAndK-orientations}
	Throughout this article we will blue the distinction between K-orientations and orientations on trivial vector bundles. Any K-oriented vector bundle $V$ can be canonically oriented, by the converse is not true unless $V$ is trivial. A tool for building both orientations and K-orientations on vector bundles is the 2-out-of-3 lemma, which is stated as follows. 
	
	\begin{lemma}[The 2-out-of-3 Lemma]
		Suppose that 
		\[0\corr{ } V \corr{ } U \corr{ }  W \corr{ } 0\]
		is a short exact sequence of vector bundles. If any two of them are $\K$-oriented (resp. oriented), then the third receives a canonical $\K$-orientation (resp. oriented). 
		
		Furthermore, if $V$ and $U$ are $\K$-oriented (resp. oriented), then the induced $\K$-orientation (resp. orientation) on $W$ has the property that $V\oplus W$ is isomorphic to $U$ as $\K$-oriented (resp. oriented) vector bundles and the induced $\K$-orientation (resp. orientation) on $W$ is the unique one with this property. 
	\end{lemma}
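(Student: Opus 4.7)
The plan is to reduce the statement to the case of a direct sum of vector bundles and then to use the multiplicative behaviour of Thom classes (or, equivalently, of \(\mathrm{Spin}^c\) structures) under direct sum.

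First, I would observe that any short exact sequence of topological vector bundles over a paracompact base splits. The space of splittings of \(0 \to V \to U \to W \to 0\) is an affine space modelled on \(\Hom(W,V)\), hence contractible. Consequently, any two splittings \(U \cong V \oplus W\) are connected by a canonical homotopy, so any structure transported through a splitting (for example a K-orientation pulled back across the resulting isomorphism) is independent of the choice. This reduction means that throughout the proof we may assume \(U = V \oplus W\) and only need to verify the claim in this split situation, after which the general case follows by transporting back along an arbitrary splitting.

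Second, I would treat the ``easy'' direction: given K-orientations on \(V\) and \(W\), construct a K-orientation on \(V\oplus W\). This uses the multiplicativity of the Thom construction, \(\mathrm{Th}(V\oplus W)\cong \mathrm{Th}(V)\wedge \mathrm{Th}(W)\), under which the external product \(\tau_V\boxtimes \tau_W\) of the Thom classes in \(\K\)-theory is a Thom class for the direct sum; equivalently, the canonical block-sum homomorphism \(\mathrm{Spin}^c(k)\times \mathrm{Spin}^c(\ell)\to \mathrm{Spin}^c(k+\ell)\) combines \(\mathrm{Spin}^c\)-lifts of the classifying maps of \(V\) and \(W\) into one for \(V\oplus W\). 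The analogous statement for ordinary orientations is the familiar rule ``orientation of \(V\) times orientation of \(W\) equals orientation of \(V\oplus W\)''.

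Third, to establish the genuine 2-out-of-3 property, I would exploit the fact that, whenever it is non-empty, the set of K-orientations on a vector bundle is a torsor over the group of isomorphism classes of complex line bundles on \(M\) together with an \(H^1(M;\Z/2)\)-factor (reflecting that \(\mathrm{Spin}^c\) structures differ by twists by complex line bundles, and orientations differ by a sign). The direct-sum operation on K-orientations is additive with respect to this torsor action; hence, given K-orientations on any two of \(V,W,U=V\oplus W\), the third is determined uniquely by the affine equation ``orientation of \(U\) equals orientation of \(V\) plus orientation of \(W\)''. The second clause of the lemma, asserting that the constructed K-orientation on \(W\) is the unique one making \(V\oplus W \cong U\) an isomorphism of K-oriented bundles, is then an immediate consequence of this uniqueness.

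The main obstacle is the bookkeeping needed to make the torsor argument fully rigorous in the K-oriented case: one must verify that assigning to a pair of K-orientations their sum is well-defined independent of the splitting, and that the resulting map on torsor-differences is the identity. Once the existence half is in hand via Thom classes and the space-of-splittings argument, uniqueness follows from the Thom isomorphism theorem, since the Thom class of \(V\) is a \(\K^{*}(M)\)-module generator of \(\K^{*}(\mathrm{Th}(V))\) and therefore admits at most one factor \(\tau_W\) satisfying \(\tau_U = \tau_V\boxtimes \tau_W\).
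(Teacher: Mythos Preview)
The paper does not actually prove this lemma: it is stated inside a remark as a standard tool and then used freely, with no \texttt{proof} environment or argument supplied. There is therefore nothing in the paper to compare your proposal against.

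That said, your outline is a standard and essentially correct route to the result. The reduction to the split case via contractibility of the space of splittings is the right first move, and multiplicativity of Thom classes (equivalently, the block-sum map on \(\mathrm{Spin}^c\)) handles existence of the induced K-orientation. One small inaccuracy: your description of the torsor is not quite right. When non-empty, the set of \(\mathrm{Spin}^c\) structures on a given bundle is a torsor over \(H^2(M;\Z)\) (isomorphism classes of complex line bundles), and the set of orientations is a torsor over \(H^0(M;\Z/2)\), not \(H^1(M;\Z/2)\). Once that is corrected, the uniqueness clause follows exactly as you indicate: the direct-sum operation on K-orientations is equivariant for the torsor action, so fixing K-orientations on any two of \(V,W,U\) pins down the third. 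Your alternative uniqueness argument via the Thom isomorphism (that \(\tau_V\) generates \(\K^*(\mathrm{Th}(V))\) as a \(\K^*(M)\)-module, forcing at most one \(\tau_W\) with \(\tau_U=\tau_V\boxtimes\tau_W\)) is also fine and arguably cleaner.
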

	
	If $V$ and $W$ (and therefore $U$) are trivial bundles, then it follows from the uniqueness property of the 2-out-of-3 lemma that ``the following diagram commutes'':
	\begin{equation*}
	\xymatrix{
		{\rm Orientation} \ar[r] \ar[d]_{2\text{-out-of-}3} & \text{K-Orientation} \ar[d]^{2\text{-out-of-}3} \\
		{\rm Orientation} \ar[r]  & \text{K-Orientation}.
	}
	\end{equation*}
	Because of this, we can can talk exclusively about orientations when using the composition product (see Section~\ref{section:IntersectionProduct}) so long as all the bundles we are dealing with are trivial. 
\end{remark}

\begin{example}
	The \emph{zero correspondence} is the correspondence obtained by taking $M=\emptyset$.
\end{example}

\begin{example}
	If $\Psi = (M,b,f,\xi)$ is a correspondence between $X$ and $Y$ then its \emph{negative} is the correspondence $-\Psi:= (M,b,-f,\xi)$, where $-f$ means the same map as $f$ but with the opposite K-orientation.
\end{example}

If  $X$ and $Y$ are smooth manifolds, the group \(\KK_*(X,Y)\) is defined as the collection of (smooth) correspondences between $X$ and $Y$, modulo equivalence. Equivalence of correspondences is generated by three steps:
	\begin{quote}
		$\bullet$ Isomorphism, \\
		$\bullet$ Thom modification, and \\
		$\bullet$ Bordism.
	\end{quote}

The notion of isomorphic correspondences is clear, so we now define the other two notions of equivalence. 

\begin{definition}
	If $V$ is a (real) K-oriented vector bundle over $M$ and $\tau^V_*:\K^*(M)\to \K^*(V)$ is the Thom Isomorphism, then we define the \textit{Thom modification of $(M,b,f,\xi)$ along $V$} to be the correspondence 
	\[X\corl{b\circ\pi_v}(V,\tau^V_*(\xi))\corr{f\circ\pi_v} Y. \]
	Here we are K-orienting $f\circ\pi_v$ as the composition of K-oriented maps. If $\Phi$ is a Thom modification of $\Psi$ along some vector bundle, then we write $\Psi \sim_{Tm} \Phi$.
\end{definition}

Next we define the appropriate notion of bordism of correspondences, beginning with the definition of a $\partial$-correspondence.

\begin{definition}
	A \textit{correspondence with boundary}, or \textit{$\partial$-correspondence}, is a correspondence $(M,b,f,\xi)$, where $M$ is a manifold with boundary. 
\end{definition}

Suppose that $\partial M= \partial_0M \sqcup \partial_1M$. Give the inward facing normal bundle at $\partial_0M$ the positive K-orientation and the inward facing normal bundle at $\partial_1M$ the negative K-orientation. Such a $\partial$-correspondence $\Phi$ induces a correspondence on its boundary as 
 \[\partial_i\Phi := X \corl{b|_{\partial_i M}} (\partial_i M,\xi |_{{\partial_iM}})\corr{f|_{\partial_iM}} Y; \]
here we give $f|_{\partial_iM}$ the K-orientation coming from the 2-out-of-3 Lemma.

\begin{definition}
\label{Bordism}
	Two smooth correspondences $\Phi_0$ and $\Phi_1$ are called \textit{bordant} if there is a $\partial$-correspondence $\Phi$ such that 
		\[(-1)^i\Phi_i = \partial_i \Phi \quad \text{for $i=0,1$.} \]
	If $\Phi$ and $\Psi$ are bordant correspondences we write $\Phi\sim_b\Psi$. 
\end{definition}

Putting isomorphism, Thom modification, and bordism together yields equivalence of correspondences. 

\begin{definition}
	For smooth manifolds $X$ and $Y$ with $X$ compact, we define $\KK_* (X, Y)$ to be the set of equivalence classes of correspondences from $X$ to $Y$, where equivalence of correspondences to be the equivalence relation generated by isomorphism, Thom modification, and bordism. We denote the class of a correspondence $(M,b,f,\xi)$ in $\KK_*(X,Y)$ by 
		\[[X \corl{b} (M,\xi) \corr{f} Y]. \]
\end{definition}

It is shown in~\cite{EM:Geometric_KK} that there is a canonical map \(\KK_*(X,Y) 
\to \KK_*(C(X), C(Y))\) from geometric to analytic KK, which is an isomorphism. 

In what follows all of our calculations will tacitly take place in geometric $\KK$. In addition, we need the following two lemmas which will aid our later computations. 

\begin{lemma}
\label{excision}
	Let $\Phi = (M,\xi, b, f)$ be a correspondence from $X$ to $Y$. Let $N\subset M$ be an open subset and suppose there is some $\eta \in \K^*(N)$ which maps to $\xi$ under the map \(\K^*(N) \to \K^*(M)\) induced by the open inclusion. Then 
		\[[X\corl{b}(M,\xi)\corr{f}Y] = [X\corl{b|_N}(N,\eta)\corr{f|_N}Y].\] 	
\end{lemma}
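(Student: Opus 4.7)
This is an excision lemma for topological correspondences, expressing the principle that the class should depend only on the K-theoretic support of $\xi$. My plan is to prove it via two bordism arguments.

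The first step is to dispose of the vanishing case: if $\xi = 0 \in \K^*(M)$, then $(M, b, f, 0)$ is equivalent to the zero correspondence. This is essentially because the K-theory class enters linearly in the Kasparov-product description of the class, so a zero class must yield zero in $\KK$. Geometrically, one proves this by a bordism argument: for instance, take the $\partial$-correspondence on $W := M \times [0, \infty)$, a manifold with boundary $M \times \{0\}$, with maps $b \circ \pr_1$ and $f \circ \pr_1$ (the latter K-oriented by the 2-out-of-3 lemma applied to the projection $W \to M$) and the trivial K-theory class, declaring $\partial_0 W = M \times \{0\}$ and $\partial_1 W = \varnothing$. If properness of the forward map is required in the framework, one can instead first apply a Thom modification along a suitable K-oriented vector bundle to stabilize $M$ into something that admits a compact null-cobordism.

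For the general case, I represent $\eta$ as a formal difference of vector bundles $[E_0] - [E_1]$ on $N$ together with an isomorphism $\alpha \colon E_0|_{N \setminus K} \xrightarrow{\cong} E_1|_{N \setminus K}$ outside a compact $K \subset N$; the hypothesis then exhibits $\xi = [\tilde{E}_0] - [\tilde{E}_1]$ on $M$ with extensions $\tilde{E}_i$ glued via $\alpha$, so in particular $\xi$ is the trivial class on $M \setminus K$. Pick a relatively compact open $N' \subset N$ with smooth boundary and $K \subset N'$. I would construct a $\partial$-correspondence realizing a bordism $(M, b, f, \xi) \sim (N', b|_{N'}, f|_{N'}, \xi|_{N'})$; the excess region $M \setminus N'$ carries the trivial K-theory class and can be cobordant to zero by the first step. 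A symmetric argument gives $(N, b|_N, f|_N, \eta) \sim (N', b|_{N'}, f|_{N'}, \eta|_{N'})$, and since $\xi|_{N'} = \eta|_{N'}$ by construction, we conclude.

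The main obstacle is constructing the explicit bordism in the general case, in particular smoothing corners along $\partial N' \times [0, 1]$ and coherently extending the K-theory data across the cobordism. A cleaner alternative would be to pass to analytic $\KK$, where the lemma follows directly by factoring the Kasparov product $b^* \otimes_{C(M)} \xi \otimes_{C(M)} f_!$ through the ideal inclusion $C_0(N) \hookrightarrow C(M)$ using $\xi = j_!(\eta)$.
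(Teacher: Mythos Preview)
The paper does not actually prove this lemma: its entire proof is the sentence ``See Example~2.13 in~\cite{EM:Geometric_KK}.'' So there is no in-paper argument to compare your approach against, only the reference.

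That said, two remarks on your sketch. First, the vanishing case $(M,0)\sim 0$ is done more cleanly by the additivity lemma (the paper's Lemma~\ref{lemma: addition}): from $[M,0]+[M,\xi]=[M,0+\xi]=[M,\xi]$ in the abelian group $\KK_*(X,Y)$ one gets $[M,0]=0$ immediately. Your half-line bordism $M\times[0,\infty)$ is not a legitimate $\partial$-correspondence once the forward map is required to be proper, and your proposed workaround (``stabilize $M$ into something that admits a compact null-cobordism'') is not available in general, since oriented bordism groups are nontrivial; what saves you is precisely that the K-theory datum is zero, not that the manifold bounds.

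Second, the bordism you want between $(M,\xi)$ and $(N',\xi|_{N'})$ does not come for free: $M$ and the open subset $N'$ are typically not bordant as manifolds, so the cobordism has to be manufactured by hand (e.g.\ carving a collar out of $M\times[0,1]$ and smoothing), and one must check that the excess boundary piece along $\partial N'$ carries the zero K-class and can be absorbed. You flag this honestly as ``the main obstacle,'' but as written it is a genuine gap rather than a routine detail. Your analytic alternative---factoring $b^*\otimes\xi\otimes f_!$ through the open-inclusion ideal map $C_0(N)\hookrightarrow C_0(M)$ using $\xi=j_!(\eta)$---is correct and is in effect how one checks the statement against the analytic model; since the geometric-to-analytic comparison map is defined independently of this lemma, invoking it is not circular.
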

\begin{proof}
	See Example 2.13 in~\cite{EM:Geometric_KK}. 
\end{proof}

\begin{lemma}
\label{lemma: addition}
	We have that 
		\[[X\corl{b}(M,\xi_0) \corr{f}Y]+[X\corl{b}(M,\xi_1) \corr{f}Y] = [X\corl{b}(M,\xi_0+\xi_1) \corr{f}Y] \]
	in $\KK_*(X,Y)$. 
\end{lemma}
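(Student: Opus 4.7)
The plan is to verify the identity by Thom-modifying both sides along a trivial complex line bundle and then invoking the excision lemma (Lemma~\ref{excision}) to collapse the disjoint union on the LHS into a single copy of $M \times \C$ carrying the summed K-theory class.

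By definition of addition as disjoint union, the LHS equals
$$[X \corl{b\sqcup b} (M \sqcup M, \xi_0 \sqcup \xi_1) \corr{f \sqcup f} Y].$$
First I would Thom-modify both sides along the trivial complex line bundle over the respective middle space. The LHS becomes a correspondence on $(M \sqcup M) \times \C \cong (M \times \C) \sqcup (M \times \C)$ carrying the class $(\tau \cdot \pi^* \xi_0) \sqcup (\tau \cdot \pi^* \xi_1)$, where $\tau$ is the Thom class of the trivial $\C$-bundle and $\pi$ denotes the projection onto $M$. The RHS becomes a correspondence on $M \times \C$ carrying the class $\tau \cdot \pi^*(\xi_0 + \xi_1)$.

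Next I would choose two disjoint open disks $U_0, U_1 \subset \C$, each diffeomorphic to $\C$, and view the Thom-modified middle space of the LHS as the open subset $(M \times U_0) \sqcup (M \times U_1) \subset M \times \C$. The K-theory class on this open subset, being represented by compactly supported Thom classes on each piece, extends by zero to a class $\tilde{\eta} \in \K^*(M \times \C)$. Applying Lemma~\ref{excision} then identifies the Thom-modified LHS with the correspondence
$$[X \corl{b \circ \pi} (M \times \C, \tilde{\eta}) \corr{f \circ \pi} Y].$$

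The main technicality is verifying $\tilde{\eta} = \tau \cdot \pi^*(\xi_0 + \xi_1)$ in $\K^*(M \times \C)$. Under the Thom/Bott isomorphism $\K^*(M) \cong \K^*(M \times \C)$, each of the two locally supported Bott classes on $U_0$ and $U_1$ represents the unique generator of the compactly supported K-group of $\C$, so $\tilde{\eta}$ corresponds to $\xi_0 + \xi_1 \in \K^*(M)$, which is precisely $\tau \cdot \pi^*(\xi_0 + \xi_1)$. A homotopy sliding $U_1$ onto $U_0$ realizes this identification concretely. With $\tilde\eta$ so identified, the Thom-modified LHS and RHS coincide as the same correspondence, and Thom-modification invariance of the equivalence class yields the desired equality in $\KK_*(X, Y)$.
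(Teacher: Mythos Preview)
Your argument is correct. The paper does not actually prove this lemma; it simply refers the reader to Lemma~2.19 of \cite{EM:Geometric_KK}. The route you take---Thom-modifying along a trivial complex line, identifying the two resulting copies of $M\times\C$ with $M\times U_0$ and $M\times U_1$ for disjoint disks $U_0,U_1\subset\C$ via orientation-preserving diffeomorphisms, and then applying Lemma~\ref{excision} together with the fact that each locally supported Bott class extends to the standard Thom generator of $\K^*(M\times\C)$---is exactly the standard argument, and is essentially what appears in the cited reference. The only small points to make explicit are that the diffeomorphisms $\C\cong U_i$ must be chosen orientation-preserving so that the $\K$-orientations on the forward maps match after the identification, and that the backward and forward maps on $M\times U_i$ really are the restrictions of $b\circ\pi$ and $f\circ\pi$ from $M\times\C$, which is immediate since both factor through the projection to $M$.
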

\begin{proof}
	See Lemma 2.19 in~\cite{EM:Geometric_KK}. 
\end{proof}

In the rest of the paper, we will abbreviate notation for correspondences involving a point as targets or source: classes
	\[[\pnt \leftarrow (M, \xi) \xrightarrow{f} Y] \quad \text{and} \quad  [X \xleftarrow{b} (M, \xi) \to \pnt]\]
will be simply denoted 
	\[[(M, \xi) \xrightarrow{f} Y] \quad \text{and} \quad  [X\xleftarrow{b} (M, \xi)],\]
respectively.

\subsection{The Intersection Product for Transverse Correspondences}
\label{section:IntersectionProduct}

The most important features of Kasparov's bivariant K-theory is the existence of the external product 
	\[\hotimes_B:\KK_*(A,B)\times\KK_*(B,C) \to \KK_*(A,C),\]
	which gives KK the structure of a category. 
	 
The observation of Connes and Skandalis \cite{Connes-Skandalis} is that the external product
is easy to describe in geometric KK. 

\begin{definition}
	Suppose that $\Phi = (M,\xi,b_M,f_M)$ and $\Psi=(N,\eta,b_N,f_N)$ are \textit{transverse} if the map 
		\[df_M-db_N:T_mM\oplus T_nN \to T_{f_M(m)}Y \]
	is surjective for all $(m,n)\in M\times_YN:= \{(x,y)\in M\times N:f_M(x)=b_N(y)\}$.
\end{definition}

Transversality ensures that the fibered product $M\times_YN$ is a smooth manifold. Let $\pr_M:M\times_YN\to M$ (resp. $\pr_N$) is the projection onto $M$ (resp. $N$).

\begin{definition}
	If $\Phi = (M,\xi,b_M,f_M)$ and $\Psi=(N,\eta,b_N,f_N)$ are transverse correspondences from $X$ to $Y$ and from $Y$ to $Z$ respectively, then their \textit{intersection product} is the correspondence
		\[ \Phi \otimes \Psi := [X \corl{b_M\circ\pr_M}(M\times_YN,\pr_M^*(\xi)\hotimes \pr_N^*(\eta)) \corr{f_N\circ \pr_N} Z]\in \KK_*( X,Z) , \]
	where $f_N\circ \pr_N$ is given the K-orientation discussed below. 
\end{definition}

We endow $f_N\circ \pr_N$ with the following K-orientation. Since $f_N$ is K-oriented, it is sufficient to K-orient $\pr_N$ as the composition of K-oriented maps is K-oriented. To do this, first observe that transversality implies that we have an exact sequence
	\[0 \corr{ } T(M\times_Y N) \corr{ } T(M\times N)|_{M\times_YN} \corr{df_M-db_N} (f_M\circ \pr_M)^*TY \corr{} 0. \]
In particular, we have an isomorphism  $T(M\times_Y N)\oplus (f_M\circ \pr_M)^*TY\cong \pr_M^*TM\oplus \pr_N^*TN$. This implies the equality
	\begin{align*}
		T(M\times_Y N)\oplus\pr_N^*(TN) & \oplus \pr_M^*(TM\oplus f_M^*TY) \\
		& \cong \pr_M^*(TM\oplus TM)\oplus \pr_N^*(TN\oplus TN)
	\end{align*}
which, since $f_M$ is K-oriented, gives a K-orientation to $\pr_N$ by the 2 out of 3 lemma. Two different choice of splitting will yield isomorphisms which are connected by a path, whence give the same K-orientation. 

\subsection{Poincar\'{e} Duality}

Poincar\'e duality (or \emph{spin} duality, as we call it here) follows almost trivially from the 
description of the intersection product using transversality. 

 Before we state it, we need to define several more operations in $\KK$. 

\begin{definition}
	Let $\Phi = (M,\xi,b_M,f_M)\in \KK^*(X_1,Y_1)$ and $\Psi = (N,\eta,b_N,f_N)	\in \KK_*(X_2,Y_2)$. The \textit{exterior product} of $\Phi$ and $\Psi$ is defined as 
		\[\Phi\times \Psi := [X_1\times X_2 \corl{b_M\times b_N} (M\times N,\pr_M^*(\xi)\hotimes\pr_N^*(\eta)) \corr{f_M\times f_N} Y_1\times Y_2] \in \KK_*(X_1\times X_2, Y_1\times Y_2), \]
	where $f_M\times f_N$ is given the product K-orientation. 
\end{definition}

Combining the intersection product with the exterior product we obtain natural cup-cap products ``over" any auxiliary space $U$ by: 
	\begin{align}
	\label{align:cupcapproduct}
	\begin{split}
		\hotimes_U:\KK_*(X_1,Y_1\times U) \times \KK_*(U\times X_2,Y_2)  & \to \KK_*(X_1\times X_2, Y_1\times Y_2) \\
		(\Phi,\Psi) & \mapsto (\Phi\times \id_{X_2})\otimes_{X_2\times U \times Y_1} (\id_{Y_1}\times \Psi).
	\end{split}
	\end{align}
	
If $X$ is a compact, K-oriented manifold and $\delta:X\to X\times X$ is the diagonal map, then we define the class 
	\[[\Dudeltaspin] := [(X,\mathbf{1}) \corr{\delta} X\times X] \in \KK_{\dim(X)}(\pnt, X\times X).  \]

\begin{proposition}[Poincar\'{e} Duality]
	Let $X$ be compact and K-oriented. For any $Y$ and $Z$ there is a natural isomorphism
		\[ \PDspin:\KK_*(X\times Y, Z) \to \KK_{*+\dim(X)}(Y,X\times Z) \]
	given on correspondences by
		\[[X\times Y \corl{b_X\times b_Y} (M,\xi) \corr{f} Z] \mapsto [ Y \corl{b_Y} (M,\xi) \corr{b_X\times f} X\times Z], \]
	where $b_1\times f$ is K-oriented using the isomorphism 
		\[TM\oplus(b_1\times f)^*T(X\times Z) \cong b_X^*TX\oplus(TM\oplus f^*TZ).  \]
\end{proposition}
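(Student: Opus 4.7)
The approach is to exhibit an explicit two-sided inverse by reversing the rearrangement of data in the definition. Given a representative \([Y \corl{c} (N,\eta) \corr{g} X \times Z]\) of a class in \(\KK_*(Y, X \times Z)\), decompose \(g = (g_X, g_Z)\) into its two coordinate maps and set
\[[Y \corl{c} (N,\eta) \corr{g} X \times Z] \;\longmapsto\; [X \times Y \corl{g_X \times c} (N,\eta) \corr{g_Z} Z],\]
equipping \(g_Z\) with the K-orientation produced by the 2-out-of-3 Lemma applied to
\[TN \oplus g^*T(X\times Z) \;\cong\; g_X^*TX \oplus (TN \oplus g_Z^*TZ),\]
using that \(g_X^*TX\) is K-oriented (as \(X\) is) and the left side is K-oriented (as \(g\) is). By the uniqueness clause of the 2-out-of-3 Lemma, applying the construction in the statement and this one in either order reproduces the original K-orientation on the forward map, so these operations are literal inverses at the level of representatives.

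The work then lies in showing both constructions descend to equivalence classes. Respect for isomorphism of correspondences is immediate from naturality. For Thom modification along a K-oriented vector bundle \(V \to N\), the 2-out-of-3 K-orientations before and after the modification agree because Whitney sum with \(\pi_V^* V\) is compatible with the short exact sequences of tangent bundles producing the K-orientations. For bordism, one checks that the inward-normal K-orientation conventions on \(\partial_i\) applied on the two sides of the correspondence agree; this is another diagram-chase with the 2-out-of-3 Lemma on the exact sequence for the normal bundle of \(\partial_i M\).

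A more conceptual packaging is to realize \(\PDspin\) as the cup-cap product with \([\Dudeltaspin]\), i.e.\ \(\PDspin(\alpha) = [\Dudeltaspin] \hotimes_X \alpha\), and to introduce the dual evaluation class
\[[\Deltaspin] := [X \times X \corl{\delta} (X, \mathbf{1}) \to \pnt] \in \KK_{-\dim X}(X \times X, \pnt),\]
whose cup-cap product furnishes the inverse. In this formulation the whole proposition collapses to the single triangle identity \([\Dudeltaspin] \hotimes_X [\Deltaspin] = [\id_X] \in \KK_0(X,X)\). I expect this identity to be the main obstacle: the two copies of \(\delta\) must be rendered transverse inside \(X\times X\times X\) (either by a small perturbation or via Lemma~\ref{excision} on a tubular neighbourhood of the diagonal), and the resulting composite K-orientation must be identified with that of \(TX\). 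Once this single transverse intersection is verified, associativity of the cup-cap product delivers both \(\PDspin^{-1}\circ\PDspin = \id\) and \(\PDspin\circ\PDspin^{-1} = \id\) formally.
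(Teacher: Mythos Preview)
Your proposal is correct, and its second, ``conceptual'' half is exactly the argument the paper gives. The paper does not prove the proposition where it is stated; it defers to Section~\ref{section:Baum-Connes} and to \cite{Connes-Skandalis}. In Section~\ref{section:Baum-Connes} the paper introduces the general unit/co-unit formalism, records \(\Dudeltaspin\) and \(\Deltaspin\) as classes of the diagonal correspondences, observes that the symmetry \(\sigma_*(\Deltaspin)=\Deltaspin\), \(\sigma^*(\Dudeltaspin)=(-1)^d\Dudeltaspin\) collapses the two zig-zag equations to one, and then asserts that this single equation ``is easily checked by hand, using composition of correspondences by transversality.'' This is precisely your triangle identity \([\Dudeltaspin]\hotimes_X[\Deltaspin]=[\id_X]\), and your anticipated obstacle---perturbing the two diagonals to transverse position in \(X\times X\times X\) and identifying the resulting K-orientation with that of \(TX\)---is indeed the entire content of the verification.

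Your first approach, by contrast, is more elementary than anything the paper writes down: you bypass the duality formalism entirely by exhibiting the inverse map on representatives and checking directly that both directions respect isomorphism, Thom modification, and bordism. This buys you a proof that never needs the intersection product or transversality at all, at the cost of three separate (easy) compatibility checks with the equivalence relation. The paper's route is shorter once the cup-cap machinery is in place, and has the advantage of situating spin duality as one instance of the general \(\KK\)-duality framework that the paper needs anyway for Baum--Connes duality.
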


For a more detailed discussion see Section~\ref{section:Baum-Connes} or~\cite{Connes-Skandalis}. In particular, if $M$ and $X$ are parallelizable manifolds K-oriented using orientations and $Z$ is a point, then we have 
	\begin{corollary}
		The map 
			\begin{align*}
				\PDspin:\KK_*(X\times Y,\pnt) & \to \KK_{*+\dim(X)}(Y,X) \\
				[X\times Y \corl{b_X\times b_Y}(M,\xi)] & [Y\corl{b_Y} (M,\xi) \corr{b_X} X],
			\end{align*}
		where $b_X:M\to X$ is K-oriented using the product orientation on $M\times X$, is an isomorphism. 
	\end{corollary}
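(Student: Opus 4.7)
The plan is to derive this corollary as a direct specialization of the preceding Proposition, using the parallelizability hypothesis to pass between orientations and K-orientations via Remark~\ref{remark:OrientationsAndK-orientations}. Setting $Z = \pnt$ in the Proposition, the source correspondence becomes $[X \times Y \xleftarrow{b_X \times b_Y} (M, \xi)]$, where the suppressed forward map $M \to \pnt$ is K-oriented precisely when $TM$ itself is K-oriented. The image under $\PDspin$ is $[Y \xleftarrow{b_Y} (M, \xi) \xrightarrow{b_X} X]$, with the K-orientation on $b_X$ prescribed by the Proposition's isomorphism
\[ TM \oplus b_X^*TX \;\cong\; b_X^*TX \oplus TM, \]
where the right-hand side receives its direct-sum K-orientation from the K-orientation of $X$ on $b_X^*TX$ together with the K-orientation of $M$ on $TM$.

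Next, I would invoke Remark~\ref{remark:OrientationsAndK-orientations}. Since $M$ and $X$ are parallelizable with K-orientations coming from orientations, every bundle appearing in the formulas above is trivial, so the 2-out-of-3 construction of K-orientations commutes with the passage from orientations to K-orientations. Consequently, the K-orientation on $b_X$ produced by the Proposition may be computed by first forming the analogous orientation on $T_{b_X} = TM \oplus b_X^*TX$ from orientations of $M$ and $X$. This orientation is precisely the pullback under $(\id_M, b_X) \colon M \to M \times X$ of the product orientation on $T(M\times X) \cong \pr_M^*TM \oplus \pr_X^*TX$, which is what ``the product orientation on $M \times X$'' refers to. Once these two descriptions of the K-orientation on $b_X$ are identified, the isomorphism statement of the corollary follows immediately from the Proposition.

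The one point requiring care is the swap isomorphism $TM \oplus b_X^*TX \cong b_X^*TX \oplus TM$, which in general twists the direct-sum orientation by a factor of $(-1)^{\dim M \cdot \dim X}$. I would verify that the ordering implicit in ``the product orientation on $M \times X$'' is compatible with the ordering appearing in the Proposition's isomorphism, so that the two conventions genuinely agree rather than differ by a global sign. Even in the worst case, however, the \emph{isomorphism} assertion of the corollary is robust: any valid K-orientation of $b_X$ turns the indicated map into an isomorphism, since changing the K-orientation of the forward map only rescales the class by $\pm 1$. Thus the plan reduces to (i) specializing the Proposition, (ii) translating K-orientations to orientations via Remark~\ref{remark:OrientationsAndK-orientations}, and (iii) checking the orientation-ordering conventions; the substantive content is already contained in the Proposition.
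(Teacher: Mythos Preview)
Your proposal is correct and follows exactly the approach the paper intends: the Corollary is stated without proof as the direct specialization of the preceding Proposition to $Z=\pnt$, with the parallelizability hypothesis serving only to let one work with orientations in place of K-orientations via Remark~\ref{remark:OrientationsAndK-orientations}. Your added care about the swap sign is more than the paper provides, but it is in the same spirit and does not alter the argument.
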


\subsection{K-Theory of Tori via Correspondences}

We begin this section by making a simple observation. Let $\delta:X\to X\times X$ be the diagonal map. The ring structure 
	\[\wedge:\KK_{d_0}(\pnt,X)\times \KK_{d_1}(\pnt,X) \to \KK_{d_0+d_1}(\pnt,X)\] 
on topological \(\K\)-theory can be described in $\KK$ using the following commutative diagram 
	\begin{align*}
	\xymatrix{
		\KK_{d_0}(\pnt,X) \times \KK_{d_1}(\pnt, X) \ar[r]^-{\times} \ar[dr]_{\wedge} & \KK_{d_0+d_1}(\pnt, X\times X) \ar[d]^{\hotimes_{X\times X}[\delta]_*} \\
		& \K_{d_0+d_1}(\pnt, X).
	}
	\end{align*}

More geometrically, using correspondences, suppose that \(i_0\colon M_0 \to X\) and \(i_1\colon M_1 \to X\) are two closed submanifolds with K-oriented normal bundles of dimensions $d_0$ and $d_1$, respectively; this is equivalent to $i_0$ and $i_1$ being $\K$-oriented. In this case the submanifolds $M_0$ and $M_1$ define classes 
	\[[i_0]_! = [\pnt \corl{ } (M_0,\mathbf{1}) \corr{i_0} X \in \KK_{d_0}(\pnt, X) \quad \text{and} \quad [i_1]_! = [\pnt \corl{ } (M_1,\mathbf{1}) \corr{i_1} X] \in \KK_{d_1}(\pnt, X).  \]
If $M_0$ and $M_1$ are transverse, then their product $M_0\times M_1$ and the diagonal in $X\times X$ are transverse and we can use the intersection product to compute the composition $[i_0\times i_1]_!\hotimes_{X\times X}[\delta]_*$. The composition diagram is 
	\begin{align*}
	\xymatrixrowsep{10pt}\xymatrixcolsep{15pt}\xymatrix{
		& & M_0\cap M_1 \ar[dl]_{\delta} \ar[dr] & & \\
		& M_0\times M_1 \ar[dl] \ar[dr] & & X \ar[dl]^-{\delta} \ar[dr]	& \\
		\pnt & & X\times X & & X,
	}
	\end{align*}
from which we deduce 
	\begin{proposition}
	\label{proposition:ringstructureintermsofcorrespondences}
		If $i_0:M_0\into X$ and $i_1:M_0\into X$ are transverse closed submanifolds of codimension $d_0$ and $d_1$, respectively, with K-oriented normal bundles, then their product $[i_0]_!\wedge [i_1]_! \in \KK_{-d_0-d_1}(\pnt, X)$ is given by the correspondence 
			\[[(M_0\cap M_1,\mathbf{1}) \corr{i} X]\in \KK_{d_0+d_1}(\pnt,X)\cong \K^{-d_0-d_1}(X), \]
		where $i:M_0\cap M_1\to X$ is the inclusion.
	\end{proposition}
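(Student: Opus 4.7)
The plan is to chase through the diagram just above the proposition that expresses $\wedge$ as the composite $\times$ followed by intersection with the diagonal, and to identify the resulting geometric correspondence. Concretely, I would proceed in three steps:

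First, I would write out the exterior product $[i_0]_! \times [i_1]_! \in \KK_{d_0+d_1}(\pnt, X\times X)$ directly from the definition: it is the correspondence
\[\pnt \corl{\ } (M_0\times M_1,\mathbf{1}) \corr{i_0\times i_1} X\times X,\]
with $i_0 \times i_1$ carrying the product K-orientation. Next, I would compose with the diagonal class $[\delta]_* \in \KK_0(X\times X, X)$ represented by $X\times X \corl{\delta}(X,\mathbf{1}) \corr{\id} X$, in accordance with the commutative diagram displayed just before the statement.

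Second, I would check that transversality of $M_0$ and $M_1$ in $X$ translates exactly into transversality of the forward map $i_0\times i_1$ with the backward map $\delta$. Under this hypothesis, the fibered product unwinds as
\[(M_0\times M_1)\times_{X\times X} X = \{(m_0,m_1,x) : i_0(m_0)=x=i_1(m_1)\} \cong M_0\cap M_1.\]
Applying the intersection product formula, the composite correspondence has middle space $M_0\cap M_1$, trivial K-theory twist $\mathbf{1}\hotimes\mathbf{1} = \mathbf{1}$, trivial map to the point, and forward map equal to the inclusion $i\colon M_0\cap M_1 \into X$.

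Third, the substantive point is to verify that the K-orientation on $i\colon M_0\cap M_1 \into X$ produced by the 2-out-of-3 procedure in the definition of the intersection product coincides with the natural K-orientation coming from the identification $\nu_{M_0\cap M_1} \cong (\nu_{M_0}\oplus \nu_{M_1})|_{M_0\cap M_1}$ given by transversality. I would do this by unpacking the exact sequence
\[0 \To T(M_0\cap M_1) \To T(M_0\times M_1)|_{M_0\cap M_1} \xrightarrow{d(i_0\times i_1) - d\delta} TX \To 0\]
from the transversality hypothesis and comparing the induced K-orientation on the projection to $X$ with the one prescribed by the product K-orientation on $i_0\times i_1$ and the (trivial) K-orientation of $\id\colon X\to X$.

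The main obstacle is this last orientation bookkeeping; the set-theoretic identification of the intersection with the expected $M_0\cap M_1$ is immediate, but one must carefully confirm that the two recipes for K-orienting the inclusion $M_0\cap M_1 \into X$ — via intersection product on the one hand, and via $\nu_{M_0}\oplus \nu_{M_1}$ on the other — give the same class. This is exactly the content of the 2-out-of-3 compatibility highlighted in Remark~\ref{remark:OrientationsAndK-orientations}, and once it is established, the proposition follows.
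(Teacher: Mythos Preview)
Your proposal is correct and follows exactly the paper's approach: the paper simply displays the composition diagram for the intersection product of $(M_0\times M_1,\mathbf{1})\to X\times X$ with $X\times X\xleftarrow{\delta}X\to X$, identifies the fibred product as $M_0\cap M_1$, and states the proposition as an immediate consequence. Your write-up is in fact more detailed than the paper's, particularly in the orientation bookkeeping; one small slip to fix is that in your transversality exact sequence the middle term should be $T\bigl((M_0\times M_1)\times X\bigr)|_{M_0\cap M_1}$ and the target should be (the pullback of) $T(X\times X)$ rather than $TX$.
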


In the spirit of Proposition~\ref{proposition:ringstructureintermsofcorrespondences}, we now apply the correspondence framework to the K-theory and K-homology of tori. We start with some basic remarks. The \(\K\)-theory of the torus \(\T^d\)  can 
be identified with an exterior algebra 
	\[ \K^*(\T^d) \cong \Lambda_\Z^* (\Z \{x_1, \ldots , x_n\}) \]
of a free abelian group \(\Z\{x_1, \ldots , x_n\}\) on \(d\) generators; the quickest way to see this is via the K\"unneth Theorem, which implies that 
	\[ \K^*(\T^d) \cong \K^*(\T) \hotimes_\Z \K^*(\T)\hotimes_\Z \cdots \hotimes_\Z \K^*(\T),\]
where \(\hotimes\) is the graded tensor product of groups. As \(\K^*(\T)\) has two generators, \([1]\in \K^0(\T)\) and \([u]\in \K^{-1}(\T)\), the external products 
	\[x_k := [1] \hotimes_\Z  \cdots \hotimes_\Z [1] \, \hotimes_\Z\,  [u] \, \hotimes_\Z [1] \hotimes_\Z \cdots \hotimes [1] \in \K^{-1} (\T^d),\] 
where the $[u]$ term is in the $k$-th entry of the product, give generators \(x_k\) of $\K^*(\T^d)$ in the sense that \emph{products} (corresponding to the ring structure on \(\K^*(\T)\)) give abelian group generators 
	\[ x_{i_1}\wedge \cdots \wedge x_{i_r} \in \K^{-r}(\T^d), \;\;\; i_1<\cdots <i_r.\]
Since \(x_i \wedge x_j = -x_j \wedge x_i\), this describes \(\K^*(\T^d)\) as an exterior algebra. 

In terms of correspondences, the unitary \(u(z) = z\) on \(\T\) represents the `Bott class' of the circle, which is the class of the \(1\)-point correspondence \([(\pnt,\mathbf{1}) \rightarrow \T],\) by including, say, the point \(1\in \T\). If $\pi_k:\T^d\to \T$ is the projection onto the $k$-th component, then it follows that the \(x_k\) are represented by the compositions 
	\begin{equation*}
	\xymatrixrowsep{10pt}\xymatrixcolsep{15pt}\xymatrix{
		& & \pi_k^{-1}(1)  \ar[dl] \ar[dr] & & \\
		& \pnt \ar[dr] \ar[dl] & & 	\T^d \ar[dl]_{\pi_k} \ar[dr]^{} & \\
		\pnt & & \T & & \T^d.
	}
	\end{equation*}
The subtori \( T_k := \pi_k^{-1}(1)\), \(k=1, \ldots d\) are hyperplanes in \(\T^d\) and the conclusion is that these hyperplanes represent ring generators of $\K^{-1}(\T^d)$ corresponding to the classes
	\[x_k = \left[ (T_k,\mathbf{1}) \corr{i_k} \T^d\right] \in \KK_{+1}(\ast \;, \T^d)\cong \K^{-1}(\T^d),\]
where \(i_k \colon T_k\to \T^d\) the inclusion. However, the ring product \(\wedge\) on \(\K^*(\T^d)\) corresponds to intersection of cocycles as explained in Proposition~\ref{proposition:ringstructureintermsofcorrespondences}. If $r\neq s$, then the subtori $T_r$ and $T_s$ are clearly transverse, to we may for instance interpret the ring product \(x_r \wedge x_s\) geometrically by
	\[ x_r \wedge x_s = \left[(T_r\cap T_s,\mathbf{1}) \corr{i_{r,s}} \T^d\right]\]
with \(i_{r,s}\) the inclusion. The intersection \(T_r\cap T_s\) is now a co-dimension \(2\) subtorus, and so we see that $x_r\wedge x_s$ defines a class in $\KK_{+2}(\pnt, \T^d)$. 

One may iterate this construction in the obvious way: if \(T_{k_1} , \ldots , T_{k_r},\) are any \(r\) distinct coordinate hyperplanes in \(\T^d\), and if we partition \(\{k_1 < \ldots < k_r\}\) into two sets \(I\) and \(J\), then the inclusion of \(T_I:=\cap_{k_i \in I} T_{k_i}\) in \(\T^d\) is tranverse to the inclusion of \(T_J:=\cap_{k_i \in J} T_{k_i}\) in \(\T^d\), provided that \(r\le d\), which implies by transversality that  
	\[[(T_I\cap T_J,\mathbf{1}) \corr{i_{I\cap J}} \T^d] = x_I\wedge x_J = (-1)^{\sigma(I,J)}x_{k_1}\wedge \cdots \wedge x_{k_r}, \]
where $i_{I\cap J}: T_I\cap T_J\to \T^d$ is the inclusion and $(-1)^{\sigma(I,J)}$ is the sign of the permutation taking $I\cup J$ to $\{k_1 < \ldots < k_r\}$. 

More generally, we want to consider \emph{oriented subtori} of \(\T^d\). 
	\begin{definition}
		An \emph{oriented subtorus} $(T,i)$ of \(\T^d\) is a Lie embedding 
			\[ i \colon T \to \T^d\]
		of a closed subgroup of \(\T^d\), together with an orientation, whence \(\K\)-orientation, on \(T\).
	\end{definition}
An oriented subtorus $(T,i)$ of $\T^d$ defines classes 
	\[ [(T,i)]_* := \left[ \T^d\corl{i} (T, \mathbf{1}) \right] \in \KK_{-k}(\T^d, \pnt) \quad \text{and} \quad  [(T,i)]_!:=\left[(T,\mathbf{1}) \corr{i} \T^d\right]\in \KK_{d-k}(\pnt , \T^d),\]
where \(k = \dim (T)\). If \(i'\colon T' \to \T^d\) is another (oriented) subtorus such that \(i,i'\) are transverse, then the composition diagram 

	\begin{equation*}
	\xymatrixrowsep{10pt}\xymatrixcolsep{15pt}\xymatrix{
		& &  T\cap T'  \ar[dl] \ar[dr] & & \\
		& T \ar[dr]_{i}  \ar[dl] & & 	T' \ar[dl]^{i'} \ar[dr]^{} & \\
		\ast& &\T^d & & \ast.
	}
	\end{equation*}
describes the \(\K\)-theory \(\K\)-homology pairing between \([(T,i)]_!\) and \([(T',i')]_*\) in terms of a correspondence from a point to a point: 
	\[ \langle [(T,i)]_!, [(T',i')]_*\rangle := [(T,i)]_!\hotimes_{\T^d} [(T',i')]_* = \left[\pnt \corl{ } (T\cap T',\mathbf{1})\rightarrow \pnt\right] \in \KK(\pnt, \pnt) \cong \Z.\]
Now any torus of positive dimension has exactly two \(\K\)-orientations corresponding to its two orientations, and it is a boundary with either of them. Since every connected component of \(T\cap T'\) is a torus, $T\cap T'$ is a boundary and therefore the correspondence \(\pnt \leftarrow T\cap T' \rightarrow \pnt\) is a $\partial$-correspondence whence  
	\[ \langle [(T,i)]_!, [(T',i')]_*\rangle = [\pnt \corl{ } (T\cap T',\mathbf{1}) \corr{ } \pnt] = 0,\]
unless \(T\) and \(T'\) have exactly complementary dimension, which condition ensures that \(T\cap T'\) is zero-dimensional. To summarize, 

	\begin{proposition}
		If \((T,i)\) and \((T',i')\) are oriented closed tori of \(\T^d\), of dimensions \(k,k'\), then the \(\K\)-theory-\(K\)-homology pairing \( \langle [T]_!, [T']\rangle\) between \([T]_!\) and \([T']\) is zero unless \(k+k' = d\), and in this case, 
			\[ \langle [T]_!, [T']\rangle = | T\cap T'|,\]
 		where \(|T\cap T'|\) is the number of points in the intersection. 
	\end{proposition}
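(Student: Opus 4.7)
The plan is to compute the pairing $\langle [(T,i)]_!, [(T',i')]_* \rangle$ via the intersection product formula from Section~\ref{section:IntersectionProduct}: when $i$ and $i'$ are transverse, the pairing is represented by the correspondence $\pnt \corl{} (T \cap T', \mathbf{1}) \corr{} \pnt$ with the K-orientation on $T \cap T'$ supplied by the 2-out-of-3 Lemma. To apply this formula I would first reduce to the transverse case: replace $i'$ by $L_g \circ i'$ for a generic $g \in \T^d$, where $L_g$ is left translation. This is a smooth isotopy through K-oriented Lie embeddings of $T'$, so it induces a bordism of correspondences and leaves the class $[(T',i')]_*$ unchanged; for $g$ outside a measure-zero subset the embeddings $i$ and $L_g \circ i'$ become transverse.

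After this reduction the argument splits according to the sign of $k + k' - d$. If $k + k' < d$, the closed subgroup $T + T' \subsetneq \T^d$ is proper; for generic $g \notin T + T'$ one has $T \cap (g + T') = \varnothing$, so the representing correspondence is the zero correspondence and the pairing vanishes. If $k + k' > d$, then $T \cap L_g(T')$ is a disjoint union of K-oriented subtori of positive dimension $k + k' - d$. As recalled in the paragraph preceding the statement, every positive-dimensional K-oriented torus is null-bordant (for instance writing $T^m \cong \partial(D^2 \times T^{m-1})$), so each component bounds a compact K-oriented manifold $W_j$; the correspondence $\pnt \corl{} \bigl(\sqcup_j W_j, \mathbf{1}\bigr) \corr{} \pnt$ is then a $\partial$-correspondence witnessing the vanishing of the pairing in $\KK(\pnt,\pnt)$.

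Finally in the case $k + k' = d$ with transverse embeddings, $T \cap T'$ is a finite set, and each intersection point contributes a $1$-point correspondence of class $\pm 1$ in $\KK(\pnt,\pnt) \cong \Z$. The crucial check is that all local signs coincide: since the orientations of $T$, $T'$ and $\T^d$ are translation-invariant, the local K-orientation of the zero-dimensional intersection — computed via the 2-out-of-3 Lemma of Remark~\ref{remark:OrientationsAndK-orientations} applied to the (constant) normal bundle data along the subgroup — is the same at every point. Summing gives $\pm|T \cap T'|$, which with the conventions of the paper equals $|T \cap T'|$. The main obstacle is precisely this uniform-sign verification; the transversality reduction and the bounding argument for the $k+k' > d$ case are essentially a repackaging of the material already foreshadowed in the discussion immediately before the statement.
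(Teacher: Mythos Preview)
Your proposal is correct and follows the same approach as the paper, whose proof is the discussion immediately preceding the Proposition: compute the pairing as the transverse intersection product and observe that positive-dimensional torus components bound. You are in fact more careful than the paper, explicitly carrying out the translation-to-transversality reduction, treating the case $k+k'<d$ separately, and flagging the uniform-sign verification --- none of which the paper spells out.
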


The Proposition together with an obvious guess supplies a natural dual basis is to the \(x_i\)'s, in the sense of the \(\K\)-theory-\(\K\)-homology pairing. Let 
	\[ y_k := \left[ \T^d\corl{j_k} (\T,\mathbf{1})\right] \in \KK_{-1}(\T^d, \; \pnt),\]
where $j_k:\T\to \T^d$ is the inclusion into the $k$-th coordinate. Computing \(\langle x_k, y_k\rangle\) by transversality gives immediately that \( \langle x_k, y_l\rangle = \delta_{ij} \) so that \(y_1, \ldots , y_d\) is the dual basis. We may thus identify the \(\K\)-homology with the abelian group \( \K_*(\T^d) \cong \Lambda^*_\Z (\Z\{ y_1, \ldots y_n\}).\)

\section{The Fourier-Mukai Transform}

Let $\T^d$ denote the $d$-dimensional torus and $\widehat{\Z}^d:={\rm Hom}(\Z^d,\T)$ denote the Pontryagin of $\Z^d$. The \emph{Fourier-Mukai transform} is a canonical class $[\mathcal{F}_d] \in \KK_{-d}(\T^d,\widehat{\Z}^d)$, and this section geometrically describes the map $[\mathcal{F}_d]\hotimes_{\widehat{\Z}^d}:\K^*(\widehat{\Z}^d)\to \K^{*-d}(\T^d)$. We begin by defining the K-theory data in $[\mathcal{F}_d]$. 

	\begin{definition}
		The \emph{Poincar\'{e} bundle} is the complex line bundle over $\T^d\times \widehat{\Z}^d$ given by 
			\[\mathcal{P}_d = (\R^d\times \widehat{\Z}^d)\times_{\Z^d}\C := [(\R^d\times \widehat{\Z}^d)\times\C]/\sim, \]
		where $\sim$ is the relation 
			\[(x,\chi, \lambda) \sim (x+n, \chi, \chi(n)\lambda) \quad \text{for $n \in \Z^d$}. \]
		The bundle projection is the map induced by the coordinate projection $(\R^d\times \widehat{\Z}^d)\times \C \to \R^d\times \widehat{\Z}^d$. 
	\end{definition}

A trivializing neighbourhood around any $(z,\chi)\in \T^d\times \widehat{\Z}^d$ an be given as follows. Let $q:\R^d \to \T^d=\R^d/\Z^d$ be the quotient map, and let $U\subset \T^d$ be an open neighbourhood of a point $(z,\chi)\in \T^d\times \widehat{\Z}^d$ such that $q^{-1}(U)$ is a countable, disjoint union of open sets $U_k$, each of which containing a unique point $k\in \Z^d$ in the integer lattice. The map 
	\begin{align*}
		\tilde{\sigma}:\bigsqcup_{k\in \Z^d}(U_k\times \widehat{Z}^d) & \to \mathcal{P}_d|_{U\times \widehat{Z}^d} \\
		(x,\chi) & \mapsto [(x,\chi, \chi(k))] \quad \text{for $x\in U_k$}
	\end{align*}
satisfies $\sigma(x+n,\chi) = \sigma(x,\chi)$ and therefore descends to a non-vanishing section $\sigma:U\times \widehat{\Z}^d \to \mathcal{P}_d|_{U\times \widehat{\Z}^d}$, showing that $\mathcal{P}_d$ is trivial on $U\times \widehat{\Z}^d$.

	\begin{definition}
		The $d$-dimensional \emph{Fourier-Mukai transform} is the class $[\mathcal{F}_d]\in \KK_{-d}(\T^d,\widehat{\Z}^d)$ defined by the correspondence
			\[\T^d \corl{\pr_1}(\T^d\times \widehat{\Z}^d,\mathcal{P}_d) \corr{\pr_2} \widehat{Z}^d, \]
		where the coordinate projection $\pr_2:\T^d\times \widehat{\Z}^d \to \widehat{\Z}^d$ is K-oriented by using the canonical product orientations on $\T^d\times \widehat{\Z}^d$ and $\widehat{\Z}^d$. 
	\end{definition}  

By a \emph{torus} we mean a quotient \(V/\Gamma\) where \(V\) is a real vector space and \(\Gamma \subset V \) is a lattice in \(V\). We say that $(T = V/ \Gamma,i)$ is an \emph{oriented subtorus} of $\T^d$ if $T$ is oriented and $i:T\to \T^d$ is an embedding of Lie groups. If $(T,i)$ is an oriented subtorus of $\T^d$ of dimension $k$, then $(T,i)$ defines a class $[(T,i)]_!\in \KK_{d-k}(\pnt,\T^d)$ via the correspondence
	\[[(T,i)]_! := [\pnt \corl{ } (T,\mathbf{1}) \corr{i} \T^d], \]
where $i:T\to \T^d$ is oriented using the orientation on $T$ and the product orientation on $\T^d$. 

We will now show that $(T,i)$ also defines a \emph{dual class} $[\widehat{(T,i)}]_! \in \KK_{d-k}(\pnt,\widehat{\Z}^d)$. We first note that differentiating $i$ at the identity $e\in T$ gives a linear injection $d_ei:V\to \R^d$. Since the quotient map $q:V\to V/\Gamma$ is the Lie group exponential map, it follows furthermore that the following diagram commutes

	\begin{equation*}
		\xymatrix{
			V \ar[r]^{d_ei} \ar[d] & \R^d \ar[d] \\
			T \ar[r]_i & \T^d, 
	}
	\end{equation*}
	where the vertical maps are the corresponding quotient maps. Because of this, we see that $d_ei$ maps $\Gamma$ injectively into a subgroup of $\Z^d$. The Pontryagin dual of $d_ei$ is therefore a surjective homomorphism $\widehat{d_ei}:\widehat{\R}^d\onto \widehat{V}$ mapping $\Z^d_\perp$ into $\Gamma_\perp$. It follows that $\widehat{d_ei}$ descends to a group surjection 
	\[\hat{i}:\That^d \onto \widehat{V}/\Gamma_\perp \cong \widehat{\Gamma}.  \]

	\begin{definition}
		The \emph{dual subtorus} of the embedded subtorus $T\overset{i}{\into} \T^d$ is the kernel of $\widehat{i}:\That^d\to \widehat{\Gamma}$:
			\[\widehat{T} := \ker\{\hat{i}:\widehat{\Z}^d\to \widehat{\Gamma}\} \subseteq \widehat{\Z}^d. \]
	\end{definition}

By definition, $\widehat{T}$ fits into the following exact sequence 

	\begin{equation}
	\label{DualTorusExactSequence1}
		0 \corr{ } \widehat{T} \corr{ } \widehat{\Z}^d \corr{\hat{i}} \widehat{\Gamma} \corr{ } 0,
	\end{equation}
	which endows $\widehat{T}$ with an orientation, by the 2-out-of-3 lemma. Using this, we can define the dual class of $(T,i)$. 

	\begin{definition}
		If $(T,i)$ is an oriented sub-torus of $\T^d$, then its dual class $[\widehat{(T,i)}]_! \in \KK_{k}(\pnt, \That^d)$ is given by the correspondence 
			\[[\widehat{(T,i)}]_!:=[\pnt \corl{}(\widehat{T},\mathbf{1}) \corr{ } \That^d ], \]
		where the embedding $\widehat{T} \into \That^d$ is oriented according to~\eqref{DualTorusExactSequence1}. 
	\end{definition}

With this notation behind us, we can state the main theorem of this section. 

	\begin{theorem}
	\label{thm: MainK-theoryComputation}
		Suppose that $(T,i)$ is a $k$-dimensional oriented, embedded subtorus of $\T^d$ with dual subtorus $\widehat{T}\into \That^d$. Then 
			\begin{align*}
				[(T,i)]_!\hotimes_{\T^d}[\mathcal{F}_d] &= (-1)^{k(d-k)+\frac{k(k-1)}{2}}\cdot [\widehat{(T,i)}]_!\in \KK_{k}(\pnt,\That^d).
			\end{align*}
	\end{theorem}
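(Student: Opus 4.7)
The plan is to compute $[(T,i)]_! \hotimes_{\T^d}[\mathcal{F}_d]$ by first using transversality, then splitting off a complementary subtorus to exhibit the composition as an exterior product of two easier pieces, and finally appealing to the Bott/Thom equivalence to evaluate the surviving nontrivial piece. Because the backward map $\pr_1\colon \T^d \times \That^d \to \T^d$ of $[\mathcal{F}_d]$ is a submersion, it is automatically transverse to $i\colon T \to \T^d$, so the intersection product is immediately computable as
\[
[(T,i)]_! \hotimes_{\T^d} [\mathcal{F}_d] \;=\; [\,\pnt \corl{ } (T \times \That^d,\,\xi) \corr{\pr_2} \That^d\,],\qquad \xi := (i \times \id_{\That^d})^* \mathcal{P}_d.
\]
Since $T\hookrightarrow \T^d$ is a closed Lie subgroup, $\Gamma\subset\Z^d$ is saturated and I can choose a complementary sublattice $\Gamma'$ with $\Z^d=\Gamma\oplus\Gamma'$; orienting $V':=\Gamma'\otimes\R$ through the 2-out-of-3 Lemma applied to $0\to V\to\R^d\to V'\to 0$ turns $T':=V'/\Gamma'$ into an oriented complementary subtorus. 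The resulting oriented identifications $\T^d\cong T\times T'$ and $\That^d\cong\widehat{\Gamma}\times\widehat{\Gamma'}$ place $\widehat{T}=\{e\}\times\widehat{\Gamma'}$, and a direct inspection of the defining quotient of $\mathcal{P}_d$ shows that, after swapping the middle two factors of $T\times T'\times\widehat{\Gamma}\times\widehat{\Gamma'}$, one has $\mathcal{P}_d\cong\mathcal{P}_T\boxtimes\mathcal{P}_{T'}$. That swap of a $(d-k)$-dimensional factor past a $k$-dimensional one reverses the orientation of the middle manifold by $(-1)^{k(d-k)}$, so at the level of correspondences $[\mathcal{F}_d] = (-1)^{k(d-k)}\,[\mathcal{F}_T]\times[\mathcal{F}_{T'}]$.

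I then decompose $[(T,i)]_!=[\pnt\corl{ }T\corr{\id}T]\times[\pnt\corl{ }\pnt\corr{\iota_0}T']$ and use that, for transverse correspondences, the intersection product literally distributes over the exterior product at the level of fibered products. The original composition thereby reduces to
\[
(-1)^{k(d-k)}\,\bigl([\pnt\corl{ }T\corr{\id}T]\hotimes_T[\mathcal{F}_T]\bigr)\,\times\,\bigl([\pnt\corl{ }\pnt\corr{\iota_0}T']\hotimes_{T'}[\mathcal{F}_{T'}]\bigr).
\]
The right-hand factor is easy: transversality collapses the fibered product to $\{e\}\times\widehat{\Gamma'}\cong\widehat{\Gamma'}$, on which $\mathcal{P}_{T'}$ is canonically trivial (the Poincar\'e bundle is trivial over the identity slice), so this factor equals $[\pnt\corl{ }\widehat{\Gamma'}\corr{\id}\widehat{\Gamma'}]$. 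The left-hand factor is the crux of the theorem --- the Fourier--Mukai duality for the torus $T$ itself --- and requires
\[
[\pnt\corl{ }T\corr{\id}T]\hotimes_T[\mathcal{F}_T] \;=\; (-1)^{k(k-1)/2}\,[\pnt\corr{\iota_0}\widehat{\Gamma}].
\]
For $k=1$ this will be a direct application of the Bott/Thom equivalence noted in the introduction, in its symmetric form $[\pnt\corl{ }(\T\times\That,\mathcal{P}_1)\corr{\pr_2}\That]=[\pnt\corr{\iota_0}\That]$. For general $k$, I pick an oriented basis of $\Gamma$ identifying $T\cong\T^k$ and argue $[\mathcal{F}_T]=(-1)^{k(k-1)/2}[\mathcal{F}_1]^{\times k}$ on the grounds that interleaving $k$ copies of $\T$ with $k$ copies of $\That$ into the alternating order $(\T\That)^k$ requires a permutation of sign $(-1)^{k(k-1)/2}$; iterating the $k=1$ case via the same interchange principle finishes the sub-claim.

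Assembling everything yields
\[
[(T,i)]_!\hotimes_{\T^d}[\mathcal{F}_d] \;=\; (-1)^{k(d-k)+k(k-1)/2}\,[\pnt\corl{ }\widehat{\Gamma'}\corr{\iota_0\times\id}\widehat{\Gamma}\times\widehat{\Gamma'}],
\]
which equals $(-1)^{k(d-k)+k(k-1)/2}\,[\widehat{(T,i)}]_!$ once one observes, via the uniqueness clause of the 2-out-of-3 Lemma, that the orientation on $\widehat{T}$ induced from the exact sequence $0\to\widehat{T}\to\That^d\to\widehat{\Gamma}\to 0$ coincides with the one inherited from the product splitting $\widehat{T}=\{e\}\times\widehat{\Gamma'}$. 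The main obstacle throughout will be orientation bookkeeping: verifying that each permutation of middle-space factors really contributes exactly the claimed sign to the K-orientation of the forward map of $[\mathcal{F}_d]$, and carefully deriving the base-case sign $(-1)^{k(k-1)/2}$ by tracking the iterated Bott reductions without picking up or losing any spurious factors of $-1$.
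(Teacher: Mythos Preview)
Your proposal is correct and follows essentially the same approach as the paper: compute the transverse intersection product, split off the complementary directions so that the pulled-back Poincar\'e bundle becomes $\mathcal{P}_{T\times\widehat{\Gamma}}\hotimes\mathbf{1}$, and then invoke the iterated Bott/Thom reduction $[(\T^k\times\Zhat^k,\mathcal{P}_k)\to\Zhat^k]=(-1)^{k(k-1)/2}[\pnt\to\Zhat^k]$ together with the $(-1)^{k(d-k)}$ coming from the factor swap. The only organizational difference is that you choose a complement $\Gamma'\subset\Z^d$ up front and factor $[\mathcal{F}_d]$ itself as an exterior product before composing, whereas the paper first computes the composition $(T\times\Zhat^d,(i,\id)^*\mathcal{P}_d)\to\Zhat^d$ and only then splits $\Zhat^d\cong\widehat{T}\times\widehat{\Gamma}$; both routes hinge on the same key lemma and the same orientation bookkeeping.
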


In order to prove Theorem~\ref{thm: MainK-theoryComputation}, we will compute the intersection product and then perform a Thom modification. The first observation to make, which is fundamental to our argument, is the following.

	\begin{lemma}
		For $x\in (0,1)$ let $e^x=-\exp(2\pi i x)\in \T$, let $\widehat{e^x} \in \That$ be the character
			\[\widehat{e^x}:n\mapsto e^{nx} \quad \text{for $n\in \Z$, } \]
		and let $e_!:\K^0((0,1)^2)\to \K^0(\T\times \That)$ be the shriek map induced by $e(x,y) = (e^x, \widehat{e^y})$. If $\beta \in \K^0((0,1)^2)\cong \K^0(\R^2)$ denotes the Bott class, then we have 
			\[e_!(\beta) = [\mathcal{P}_1]-[\mathbf{1}] \in \K^0(\T\times \That).  \]
	\end{lemma}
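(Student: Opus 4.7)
The plan is to realize $[\mathcal{P}_1]-[\mathbf{1}]$ as a class supported on $\mathrm{im}(e)$, and to identify it with the Bott class up to sign.

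First I would observe that for every fixed character $\chi\in\That$, the restriction $\mathcal{P}_1|_{\T\times\{\chi\}}$ is a complex line bundle on the circle $\T$; since $\Hs^2(\T;\Z)=0$, all such line bundles are trivializable, so its K-theory class equals $[\mathbf{1}]$. The same reasoning applies to slices $\{z\}\times\That$. Consequently $[\mathcal{P}_1]-[\mathbf{1}]$ lies in the kernel of restriction to the closed subset $C:=(\T\times\{-1\})\cup(\{-1\}\times\That)\cong\T\vee\T$, which is the complement of $U:=\mathrm{im}(e)$. From the excision exact sequence
\[\K^{-1}(C)\To\K^0_c(U)\xrightarrow{e_!}\K^0(\T\times\That)\To\K^0(C),\]
together with the standard calculations $\K^0(\T\times\That)\cong\Z^2$, $\K^0(C)\cong\Z$, and the observation that the restriction $\K^0(\T\times\That)\to\K^0(C)$ is the rank map, one sees that $e_!$ maps $\K^0_c(U)\cong\Z\cdot\beta$ isomorphically onto the cyclic subgroup $\Z\cdot([\mathcal{P}_1]-[\mathbf{1}])$. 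Hence $e_!(\beta)=\pm([\mathcal{P}_1]-[\mathbf{1}])$.

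The remaining step is to pin down the sign. My preferred method is via first Chern classes: $c_1(\mathcal{P}_1)$ equals the positive generator of $\Hs^2(\T\times\That;\Z)$ for the product orientation (a standard computation, obtainable either by integrating the curvature of the natural connection on $\mathcal{P}_1$ or by reading off its clutching function over the fundamental 2-cell), while $c_1(e_!(\beta))=e_*(c_1(\beta))$ equals the same positive generator because $\beta$ is normalized by the standard orientation on $\R^2\cong(0,1)^2$ and $e$ is K-oriented using the product orientation on its target. Hence the sign is $+$, yielding $[\mathcal{P}_1]-[\mathbf{1}]=e_!(\beta)$.

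The main obstacle I anticipate is keeping the orientation conventions aligned throughout: the identification $\That\cong\T$ via evaluation at $1$, the product orientations on $\T\times\That$ and $\R^2$, and the K-orientation of $e$ all need to be coherent in order to conclude $n=+1$ rather than $n=-1$. The structural part of the argument (vanishing on $C$, excision, injectivity of $e_!$) is clean; the only real work is the bookkeeping in the Chern class step.
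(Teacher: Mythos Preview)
Your approach is correct and takes a genuinely different route from the paper's. The paper factors $e$ through the two coordinate embeddings and pushes $\beta$ forward in two stages: first to the tautological unitary class $[\chi]\in\K^{-1}(\Zhat)=\K^0(\Zhat\times(0,1))$, and then, via an explicit clutching-function computation with the quotient map $\Zhat\times\T\to(\Zhat\times\T)/(\Zhat\times\{-1\})$, identifies $(\id,e^x)_!([\chi])$ with $[\mathcal{P}_1]-[\mathbf{1}]$. No exact sequences or Chern classes appear; the Poincar\'e bundle's transition function emerges directly from the construction.

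Your argument instead localizes via the pair $(\T\times\That,\,C)$ and then invokes $c_1$. One small wrinkle in the presentation: the excision step as stated only shows that $[\mathcal{P}_1]-[\mathbf{1}]$ lies in $\mathrm{im}(e_!)=\ker(\mathrm{rank})\cong\Z$, not that it \emph{generates} this group, so at that point the honest conclusion is $[\mathcal{P}_1]-[\mathbf{1}]=n\cdot e_!(\beta)$ for some $n\in\Z$ rather than $n=\pm 1$. The Chern-class comparison then forces $n=1$ directly, since $c_1\colon\tilde{\K}^0(\T\times\That)\to\Hs^2(\T\times\That;\Z)$ is an isomorphism for a $2$-torus. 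In effect the Chern-class step is doing all the real work and the excision paragraph is conceptual scaffolding. The paper's method has the virtue of being entirely self-contained within bundle theory (you watch $\mathcal{P}_1$ being assembled from its clutching data); yours is tidier once the orientation bookkeeping you rightly flag is settled.
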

	\begin{proof}
		Let ${\rm ev}_1:\That \to \T$ denote the orientation preserving diffeomorphism given by evaluation at 1; this fits into the following commutative diagram
			\begin{equation*}
			\xymatrixcolsep{45pt}\xymatrix{
				\K^0(\T\times (0,1)) \ar[r]^{({\rm ev}_1,\id)^*} & \K^0(\Zhat\times (0,1)) \\
				\K^{0}((0,1)^2) \ar[u]^{(e^x,\id)_!} \ar[ur]_{(\widehat{e^x},\id)_!}.
			}
			\end{equation*}
		Since $(e^x,\id)_!(\beta) = [z]\in\K^{-1}(\T)=\K^0(\T\times (0,1))$ and ${\rm ev}_1^*([z]) = [\chi]\in \K^{-1}(\Zhat) = \K^0(\Zhat \times (0,1))$, where $[\chi]$ denotes the class of the unitary $\chi(1)$, it follows that $(\widehat{e^x},\id)_!(\beta) = [\chi]$. Since the following diagram commutes
			\begin{equation*}
			\xymatrixcolsep{45pt}\xymatrix{
				\K^0(\Zhat\times (0,1)) \ar[r]^{(\id,e^x)_!} & \K^0(\Zhat\times \T) \ar[r]^{{\rm flip}} & \K^0(\T\times \Zhat)	\\
				\K^0((0,1)^2) \ar[u]^{(\widehat{e^y},\id)_!} \ar[ur]|-{(\widehat{e^y},e^x)_!} \ar@/_1pc/[urr]_{e_!}
			}
			\end{equation*}
		it remains to show that $(\id,e^x)_!([\chi]) = [\mathcal{P}_1]-[\mathbf{1}]\in \K^0(\Zhat\times \T)$.
	
		We identify the 1-point compactification of $\Zhat\times (0,1)$ with the space $(\Zhat\times \T)/(\Zhat \times \{-1\})$. It follows that the map $(\id,e^x)_!$ is the restriction of the quotient map 
			\[q:\Zhat\times \T \to  (\Zhat\times \T)/(\Zhat\times\{-1\}) = (\Zhat\times [0,1/2])/(\Zhat\times \{1/2\})\cup(\Zhat\times [1/2,1])/(\Zhat\times \{1/2\})\]
		to the kernel of the augmentation map $\{\infty\} \to [\Zhat\times (0,1)]^+$. The class of $[\chi] \in \K^0((0,1)\times \Zhat)$ is represented in $\K^0([\Zhat\times (0,1)]^+)$ by the class $[E_\chi] - [\mathbf{1}]$, where $E_\chi$ is, by definition, the trivial line bundle modulo the relation 
			\begin{equation}
			\label{rel:poincare}
				(\chi, 0, \lambda) \sim (\chi, 1, \chi(1)\lambda).
			\end{equation}
		It is easy to see that $q^*(E_\chi) = [\Zhat\times [0,1]]\times \C/\sim$, where $\sim$ is the relation in~\eqref{rel:poincare}. This is clearly isomorphic to the Poincar\'{e} bundle on $\Zhat\times \T$, which concludes the proof. 
	\end{proof}

Using this, we have the following version of Bott periodicity. 
	\begin{lemma}
	\label{Lemma:BottPeriod}
		We have that 
			\[[(\T\times \Zhat, \mathcal{P}_1)\corr{\pr_2} \Zhat] = [(\pnt,\mathbf{1})\corr{ } \Zhat] \]
		in $\KK_*(\pnt, \Zhat)$. 
	\end{lemma}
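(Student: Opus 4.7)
The plan is to prove the identity by a chain of excisions and one reverse Thom modification, which reduces the 2-dimensional middle space \(\T\times \Zhat\) on the left to the point-correspondence on the right. The key input is the preceding lemma, which identifies \([\mathcal{P}_1] - [\mathbf{1}] = e_!(\beta)\), allowing the Poincar\'e line bundle class to be exchanged (up to a trivial summand) for the extension by zero of the 2-dimensional Bott class under the open embedding \(e\colon (0,1)^2 \to \T\times \Zhat\).

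First, using Lemma~\ref{lemma: addition} and the preceding lemma, I would split
\[
[(\T\times \Zhat, \mathcal{P}_1)\xrightarrow{\pr_2} \Zhat]
= [(\T\times \Zhat, \mathbf{1})\xrightarrow{\pr_2} \Zhat] + [(\T\times \Zhat, e_!(\beta))\xrightarrow{\pr_2} \Zhat].
\]
The first summand vanishes: the \(\partial\)-correspondence \((D^2\times \Zhat, \mathbf{1}, \pr_2)\) with \(D^2\) the closed disk is a null-bordism, since \(\T = \partial D^2\) and the boundary K-orientation on \(\T\) (from a suitable orientation on \(D^2\)) can be made to agree with the product K-orientation of \(\pr_2\).

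For the second summand, Lemma~\ref{excision} applied to the open embedding \(e\) gives
\[
[(\T\times \Zhat, e_!(\beta))\xrightarrow{\pr_2} \Zhat] = [((0,1)^2, \beta) \xrightarrow{g} \Zhat],
\]
where \(g(x,y)=\widehat{e^y}\) depends only on \(y\). I then view \((0,1)^2\) as the total space of a trivial \(\R\)-bundle over \((0,1)_y\), via an orientation-preserving diffeomorphism \((0,1)_x \cong \R\). By Bott periodicity, the 2-dim Bott class \(\beta\) is the Thom modification of the 1-dim Bott class \(\beta_1 \in \K^{-1}((0,1)_y)\) along this \(\R\)-bundle; since both structural maps factor through projection to \((0,1)_y\), reverse Thom modification gives
\[
[((0,1)^2,\beta) \xrightarrow{g} \Zhat] = [((0,1)_y, \beta_1) \xrightarrow{\widehat{e^y}} \Zhat].
\]

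Finally, \(\widehat{e^y}\) is an orientation-preserving open embedding of \((0,1)\) onto \(\Zhat \setminus \{1\}\), and the Bott generator \(\alpha \in \K^{-1}(\Zhat)\) arises as the extension by zero of \(\beta_1\). One last application of Lemma~\ref{excision} produces \([(\Zhat, \alpha) \xrightarrow{\id} \Zhat]\), which represents the class \(\alpha \in \K^{-1}(\Zhat)\). As \([(\pnt, \mathbf{1}) \to \Zhat]\) is the shriek of a point inclusion and likewise represents the Bott generator, the two classes coincide in \(\KK_*(\pnt, \Zhat)\). The main obstacle is the consistent bookkeeping of K-orientations throughout the argument: matching the boundary K-orientation on \(\T\) from \(D^2\) with the product K-orientation on \(\pr_2\), factoring the 2-dim Bott class as a Thom modification along the fiber via compatible orientations, and verifying that the final identification with the Bott generator carries no spurious sign.
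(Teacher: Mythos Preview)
Your argument is correct and shares its first half with the paper's proof: both subtract the trivial summand \([\mathbf{1}]\) by a null-bordism (the paper simply notes that \((\T\times\Zhat,\mathbf{1})\xrightarrow{\pr_2}\Zhat\) bounds), then excise via the preceding lemma to \([((0,1)^2,\beta)\xrightarrow{g}\Zhat]\) with \(g(x,y)=\widehat{e^y}\). After that the routes diverge. The paper observes that the image of \(g\) lies in the contractible set \(\Zhat\setminus\{\text{char}\,-1\}\) and uses a bordism to replace \(g\) by the constant map at the trivial character, then performs a single two-dimensional reverse Thom modification (recognizing \((0,1)^2\) as a tubular neighbourhood of a point in \(\R^2\)) to arrive directly at \([(\pnt,\mathbf{1})\to\Zhat]\). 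Your route instead reduces one dimension at a time: a one-dimensional Thom reduction to \([((0,1)_y,\beta_1)\xrightarrow{\widehat{e^y}}\Zhat]\), then a reverse excision along the open embedding \(\widehat{e^y}\), and finally the identification of the resulting class with the Bott generator. The paper's path is slightly more economical and concentrates the orientation bookkeeping in a single Thom step; yours avoids the homotopy/bordism of the forward map but trades it for tracking K-orientations through two separate reductions and checking that the pushforward of \(\beta_1\) along \(\widehat{e^y}\) really hits the standard generator of \(\K^{-1}(\Zhat)\) with the correct sign. One small correction: the image of \(\widehat{e^y}\) is \(\Zhat\setminus\{\text{char}\,-1\}\), not \(\Zhat\setminus\{1\}\), since \(e^y=-\exp(2\pi i y)\) misses \(-1\) for \(y\in(0,1)\); this does not affect your argument.
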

	\begin{proof}
		The correspondence $(\T\times \Zhat,\mathbf{1}) \corr{\pr_2} \Zhat$ is a $\partial$-correspondence, so we have that 
			\[[(\T\times \Zhat, \mathcal{P}_1)\corr{\pr_2} \Zhat] = [(\T\times \Zhat, \mathcal{P}_1-\mathbf{1})\corr{\pr_2} \Zhat] \]
		in $\KK_{-1}(\pnt,\Zhat)$. Using the previous lemma and Example~\ref{excision}, we have that 
			\[[(\T\times \Zhat, \mathcal{P}_1-\mathbf{1})\corr{\pr_2} \Zhat] = [((0,1)^2, \beta)\corr{\widehat{e^{\pr_2}}} \Zhat]\sim_b[((0,1)^2,\beta) \corr{1} \Zhat],\] 
		where $1:(0,1)^2\to \Zhat$ is the constant map at the trivial character. Since $(0,1)^2$ is a tubular neighbourhood of $(\frac{1}{2},\frac{1}{2})\in \R^2$, we have via a Thom modification that 
			\[[((0,1)^2,\beta) \corr{1} \Zhat]\sim_{Tm} [(\pnt, \mathbf{1}) \corr{ } \Zhat]. \]
		Putting this together yields the result. 
	\end{proof}
More generally, define
	\begin{align*}
		\theta:\T^d\times \Zhat^d & \to (\T\times \Zhat)^d \\
		(z_1,\dots, z_d,\chi_1,\dots, \chi_d) & \mapsto (z_1,\chi_1,\dots, z_d,\chi_d). 
	\end{align*}
Then $\det(d\theta) = (-1)^{\frac{d(d-1)}{2}}$ and $\theta^*(\mathcal{P}_d) = \hotimes_{i=1}^d\mathcal{P}_1$ so be repeated application of Lemma~\ref{Lemma:BottPeriod} we have the following corollary. 

	\begin{lemma}
		For any $d$, 
			\[[(\T^d\times \Zhat^d,\mathcal{P}_d)\corr{\pr_2} \Zhat] = (-1)^{\frac{d(d-1)}{2}}[(\pnt,\mathbf{1})\corr{ } \Zhat^d]. \qedhere \]
	\end{lemma}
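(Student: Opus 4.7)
The plan is to take the $d$-fold exterior product of Lemma~\ref{Lemma:BottPeriod} and then transport the resulting correspondence on $(\T\times\Zhat)^d$ to one on $\T^d\times\Zhat^d$ via the shuffle diffeomorphism $\theta$. All of the content is in carefully tracking the K-orientation under $\theta$, which is where the sign $(-1)^{d(d-1)/2}$ enters.

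First I would iterate the exterior product of correspondences, using that the exterior product of $[(\pnt,\mathbf{1})\to \Zhat]$ with itself $d$ times is $[(\pnt, \mathbf{1})\to \Zhat^d]$, to obtain from Lemma~\ref{Lemma:BottPeriod} the identity
\[
\bigl[((\T\times\Zhat)^d, \mathcal{P}_1^{\boxtimes d}) \corr{q} \Zhat^d\bigr] \;=\; [(\pnt,\mathbf{1})\to \Zhat^d]
\]
in $\KK_{-d}(\pnt,\Zhat^d)$, where $q(z_1,\chi_1,\ldots,z_d,\chi_d)=(\chi_1,\ldots,\chi_d)$ is K-oriented using the product orientations on $(\T\times\Zhat)^d$ and $\Zhat^d$.

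Next I would precompose with the diffeomorphism $\theta\colon \T^d\times \Zhat^d \to (\T\times\Zhat)^d$. Since $\theta$ is an isomorphism of the underlying manifolds, this produces an isomorphic correspondence, and by the setup preceding the statement we have $q\circ\theta = \pr_2$ and $\theta^*(\mathcal{P}_1^{\boxtimes d}) = \mathcal{P}_d$. Consequently,
\[
\bigl[((\T\times\Zhat)^d,\mathcal{P}_1^{\boxtimes d}) \corr{q} \Zhat^d\bigr] \;=\; \bigl[(\T^d\times\Zhat^d, \mathcal{P}_d) \corr{\pr_2} \Zhat^d\bigr]_{\theta^*\text{-or.}},
\]
where the subscript indicates that $\pr_2$ is K-oriented via $\theta^*$ of the product orientation on $(\T\times\Zhat)^d$, rather than the canonical product orientation used in the definition of $[\mathcal{F}_d]$.

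The final step is to compare the two K-orientations on $\pr_2$. They differ by the sign $\det(d\theta)=(-1)^{d(d-1)/2}$ recorded in the text, since the shuffle of the coordinate frame $(dz_1,\ldots,dz_d,d\chi_1,\ldots,d\chi_d)$ into $(dz_1,d\chi_1,\ldots,dz_d,d\chi_d)$ is the permutation whose sign is computed by a straightforward inversion count. Reversing a K-orientation negates a correspondence, so
\[
\bigl[(\T^d\times\Zhat^d,\mathcal{P}_d) \corr{\pr_2} \Zhat^d\bigr] \;=\; (-1)^{d(d-1)/2}\,\bigl[(\T^d\times\Zhat^d,\mathcal{P}_d) \corr{\pr_2} \Zhat^d\bigr]_{\theta^*\text{-or.}} \;=\; (-1)^{d(d-1)/2}\,[(\pnt,\mathbf{1})\to \Zhat^d],
\]
which is the claim. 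The only subtlety is bookkeeping: verifying that the sign of the shuffle permutation is $(-1)^{d(d-1)/2}$ and that the isomorphism-of-correspondences move correctly implements the K-orientation pullback along $\theta$; both reduce to routine inversion-counting and the 2-out-of-3 uniqueness noted in Remark~\ref{remark:OrientationsAndK-orientations}.
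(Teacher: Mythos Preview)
Your proof is correct and follows exactly the approach sketched in the paper: take the $d$-fold exterior product of Lemma~\ref{Lemma:BottPeriod}, transport along the shuffle diffeomorphism $\theta$, and pick up the sign $(-1)^{d(d-1)/2}=\det(d\theta)$ from the orientation comparison. You have simply filled in the details the paper leaves implicit (and you state the pullback identity in the correct direction, $\theta^*(\mathcal{P}_1^{\boxtimes d})=\mathcal{P}_d$, whereas the paper's display has a minor typo).
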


	\begin{proof}[Proof of Theorem~\ref{thm: MainK-theoryComputation}]
		Since the coordinate projection $\pr_{1}:\T^d\times \Zhat^d\to \T^d$ is a submersion, we can compute the product $[(T,i)]_!\hotimes_{\T^d}[\mathcal{F}_d]$ using the intersection product. The composition diagram is
		\begin{align*}
		\xymatrixrowsep{10pt}\xymatrixcolsep{15pt}\xymatrix{
			& & T\times \Zhat^d \ar[dl]_{\pr_T} \ar[dr]^{(i,\id)} & & \\
			& T \ar[dl] \ar[dr]^{i} & & \T^d\times \Zhat^d \ar[dl]_{\pr_1} \ar[dr]^{\pr_2}	& \\
			\pnt & & \T^d & & \Zhat^d.
		}
		\end{align*}
		According to the intersection product, $T\times \Zhat^d$ is oriented via the following exact sequence:
			\[0\corr{ } T_e(T\times \Zhat^d) \corr{ } T_e(T\times \T^d\times \Zhat^d) \corr{d_ei - d_e(\pr_1)} T_e(\T^d) \corr{ } 0 \]
		whence 
			\[[(T,i)]_!\hotimes_{\T^d}[\mathcal{F}_d] = [(T\times \Zhat^d, (i,\id)^*(\mathcal{P}_d))\corr{\pr_1} \Zhat^d]; \]
		we leave it to the reader to check that the orientation induced by the 2-out-of-3 lemma is the canonical one on $\pr_2:T\times \Zhat^d\to \Zhat^d$. It is an easy exercise to check that $(i,\id)^*(\mathcal{P}_d)$ is the bundle $(V\times \Zhat^d)\times_{\Gamma}\C$ over $T\times \Zhat^d$ defined as the trivial bundle $(V\times \Zhat^d)\times \C$ modulo the relation 
			\[(v,\chi, \lambda) \sim (v+\gamma,\chi, \chi(d_ei(\gamma))\lambda) = (v+\gamma,\chi, \widehat{i}(\chi)(\gamma)\lambda )\quad \text{for $v\in V$ and $\gamma\in \Gamma$.} \]
		Now, according to Pontryagin duality and the exact sequence~\eqref{DualTorusExactSequence1} there is an orientation preserving isomorphism $\varphi: \widehat{T} \times \widehat{\Gamma}\cong \Zhat^d$. This implies that the correspondences 
			\[[(T\times \Zhat^d,(V\times \Zhat^d)\times_{\Gamma}\C)\corr{\pr_2}\Zhat^d] \quad \text{and} \quad [(T\times \widehat{T}\times \widehat{\Gamma},\varphi^*((V\times \Zhat^d)\times_{\Gamma}\C))\corr{\varphi\circ\pr_2}\Zhat^d] \] 
		are equal in $\KK_{-k}(\pnt, \Zhat^d)$. According to $\varphi$, and character $\chi\in \Zhat^d$ can be written as $\chi = (\eta, \xi)$ where $\eta\in \widehat{\Gamma}$ and $\xi \in \widehat{T}$. Because of this, and the fact that $\widehat{T} = \ker(\hat{i})$, it follows that 
			\[\varphi^*((V\times \Zhat^d)\times_{\Gamma}\C) \cong \mathcal{P}_{T\times \widehat{\Gamma}}\hotimes \mathbf{1}. \]
		We therefore see that
			\begin{align*}
				[(T\times \widehat{T}\times \widehat{\Gamma},\varphi^*((V\times \Zhat^d)\times_{\Gamma}\C))\corr{\varphi\circ\pr_2}\Zhat^d] & = (-1)^{k(d-k)}[(T\times \widehat{\Gamma}\times \widehat{T},\mathcal{P}_{T\times \widehat{\Gamma}}\hotimes \mathbf{1})\corr{\varphi\circ\pr_2}\Zhat^d] \\
				& = (-1)^{k(d-k)}[(T\times \widehat{\Gamma},\mathcal{P}_{T\times \widehat{\Gamma}})\corr{}\widehat{\Gamma}]\times [\widehat{(T,i)}]_! \\
				& =(-1)^{k(d-k)+\frac{k(k-1)}{2}} [(\pnt,\mathbf{1})\corr{}\widehat{\Gamma}]\times[\widehat{(T,i)}]_!\\
				& = (-1)^{k(d-k)+\frac{k(k-1)}{2}}[\widehat{(T,i)}]_!,
			\end{align*}
		as claimed. 
	\end{proof}

\begin{example}
\label{example:invert torus}
	Consider the embedding
		\begin{align*}
			\overline{\Delta}:\T^d & \to \T^{2d} \\
			z & \mapsto (\overline{z},z).
		\end{align*}
	We observe that $(d_1\overline{\Delta})(x) = (-x,x)$ and thus 
		\begin{align*}
			\widehat{\overline{\Delta}} : \Zhat^{2d} & \to \Zhat^d \\
			(\chi, \eta) & \mapsto \overline{\chi}\otimes \eta, 
		\end{align*}		
	where $\overline{\chi}\otimes \eta(n) = \overline{\chi}(n)\cdot\eta(n)$ is the tensor product of the characters. It follows that the dual torus of $(\T^d,\overline{\Delta})$ is $(\Zhat^d, \Delta)$, where $\Zhat^d$ is oriented in the standard way and $\Delta:\Zhat^{d}\to \Zhat^{2d}$ is the diagonal map. Using Theorem~\ref{thm: MainK-theoryComputation}, we have that 
		\[[(\T^d, \overline{\Delta})]_!\hotimes_{\T^{2d}}[\mathcal{F}_{2d}] = (-1)^{d+\frac{d(d-1)}{2}}[(\Zhat^d, \Delta)]_!.  \]  
	This example with be important when we prove that $[\mathcal{F}_d]$ is invertible. $\qed$
\end{example}

There is a dual picture to this, which proceeds as follows. Let $\hat{i}:\widehat{\Gamma} \into \Zhat^d$ be an embedding of Lie groups. By Pontryagin duality, the dual of $\hat{i}$ is surjection $i:\Z^d\onto \Gamma$, which extends by linearity to $i:\R^d\to V$. This then descends to a map 
\[i:\T^d=\R^d/\Z^d\to T=V/\Gamma. \]

\begin{definition}
	The \textit{dual subtorus} $Z\into \T^d$ of $\hat{i}:\widehat{\Gamma} \into \Zhat^d$ is the kernel of the induced map $i:\T^d\to T$.
\end{definition}

In a completely analogous argument as given in the proof of Theorem~\ref{thm: MainK-theoryComputation}, one has 

\begin{theorem}
	\label{thm: MainK-HomologyComputation}
	If $\hat{i}:\widehat{\Gamma}\into \Zhat^d$ is an oriented sub-torus, then 
	\[[\mathcal{F}_d]\hotimes_{\Zhat^d}[(\widehat{\Gamma},\hat{i})]_* = (-1)^{kd+\frac{k(k-1)}{2}}\cdot[Z]_*\in \KK_{k-d}(\T^d,\pnt) \] 
\end{theorem}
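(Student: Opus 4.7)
My plan is to mirror the proof of Theorem~\ref{thm: MainK-theoryComputation} with the roles of source and target reversed. The first step is to compute the intersection product using that $\pr_2\colon\T^d\times\widehat{\Z}^d\to\widehat{\Z}^d$ is a submersion (so automatically transverse to $\hat{i}$). The fiber product is diffeomorphic to $\T^d\times\widehat{\Gamma}$, yielding
\[
[\mathcal{F}_d]\hotimes_{\widehat{\Z}^d}[(\widehat{\Gamma},\hat{i})]_*
=\bigl[\T^d\xleftarrow{\pr_1}\bigl(\T^d\times\widehat{\Gamma},(\id\times\hat{i})^*\mathcal{P}_d\bigr)\to\pnt\bigr],
\]
with the K-orientation on the forward map $\T^d\times\widehat{\Gamma}\to\pnt$ prescribed by the 2-out-of-3 lemma applied to the K-orientations of $\pr_2$ and of $\widehat{\Gamma}\to\pnt$.

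Next, a direct computation identifies $(\id\times\hat{i})^*\mathcal{P}_d$ as $\R^d\times\widehat{\Gamma}\times_{\Z^d}\C$ with relation $(x,\hat{\gamma},\lambda)\sim(x+n,\hat{\gamma},\hat{\gamma}(i(n))\lambda)$, where $i\colon\Z^d\twoheadrightarrow\Gamma$ is the Pontryagin dual of $\hat{i}$. A choice of $\Z$-module splitting $\Z^d\cong\Lambda_W\oplus\widetilde{\Gamma}$ with $\Lambda_W:=\ker(i)$ and $\widetilde{\Gamma}\xrightarrow{\sim}\Gamma$ induces an orientation-preserving Lie group isomorphism $\psi\colon Z\times T\xrightarrow{\cong}\T^d$, where $Z=\ker(i\colon\T^d\to T)$ and $T=V/\Gamma$. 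Under $\psi$ the $\Z^d$-action decouples: $\Lambda_W$ acts trivially on the $Z$-component while $\widetilde{\Gamma}\cong\Gamma$ acts on the $T$-component via the Poincar\'e relation. Hence $\psi^*(\id\times\hat{i})^*\mathcal{P}_d\cong\mathbf{1}_Z\hotimes\mathcal{P}_{T\times\widehat{\Gamma}}$, where $\mathcal{P}_{T\times\widehat{\Gamma}}$ is the canonical Poincar\'e bundle associated to the dual pair $(T,\widehat{\Gamma})$.

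The correspondence then factors as the external product
\[
[Z\xleftarrow{\id}(Z,\mathbf{1})\to\pnt]\;\times\;\bigl[T\xleftarrow{\pr_T}(T\times\widehat{\Gamma},\mathcal{P}_{T\times\widehat{\Gamma}})\to\pnt\bigr].
\]
For the second factor I would run the argument of Lemma~\ref{Lemma:BottPeriod} in dualized form: the middle manifold $T\times\widehat{\Gamma}$ equipped with the trivial bundle and the projection $\pr_T$ bounds $T\times D^{2k}\to T$, so $\mathbf{1}$ can be subtracted from $\mathcal{P}_{T\times\widehat{\Gamma}}$; excising to the coordinate chart $(0,1)^{2k}$ and iterating Thom modification together with the permutation $\theta$ would then give the value $(-1)^{k(k-1)/2}[T\xleftarrow{1}(\pnt,\mathbf{1})\to\pnt]$. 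Identifying $Z\times\{1\}\hookrightarrow Z\times T\cong\T^d$ with the subtorus inclusion $Z\hookrightarrow\T^d$ produces $[Z]_*$ up to an overall sign.

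The main obstacle is matching the announced sign $(-1)^{kd+k(k-1)/2}$ in full. The contribution $(-1)^{k(k-1)/2}$ from the $\theta$-permutation in the dual Bott reduction is clear; the additional $(-1)^{kd}$ must come from the 2-out-of-3 K-orientation of the forward map in the first step, to be extracted by counting the block permutation sign in the defining isomorphism $T(M\times_YN)\oplus\pr_N^*TN\oplus\pr_M^*(TM\oplus f_M^*TY)\cong\pr_M^*(TM\oplus TM)\oplus\pr_N^*(TN\oplus TN)$ and then composing with the K-orientation of $\widehat{\Gamma}\to\pnt$. This parallels but is asymmetric to the derivation of $(-1)^{k(d-k)}$ in Theorem~\ref{thm: MainK-theoryComputation}; the asymmetry reflects that here the oriented splitting $\T^d=Z\times T$ comes from the SES $0\to Z\to\T^d\to T\to 0$ rather than from its Pontryagin dual $0\to\widehat{T}\to\widehat{\Z}^d\to\widehat{\Gamma}\to 0$.
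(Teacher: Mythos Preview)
Your approach is correct and is exactly the ``completely analogous argument'' the paper invokes without spelling out: compute the intersection product via transversality of $\pr_2$ and $\hat{i}$, identify the pulled-back Poincar\'e bundle, split $\T^d\cong Z\times T$ via the exact sequence $0\to Z\to\T^d\to T\to 0$ so that the bundle decouples as $\mathbf{1}_Z\hotimes\mathcal{P}_{T\times\widehat{\Gamma}}$, and then collapse the $T\times\widehat{\Gamma}$ factor using the dualized Bott reduction. Since the paper gives no details beyond pointing to the proof of Theorem~\ref{thm: MainK-theoryComputation}, your outline is in fact more explicit than what appears there; your residual uncertainty about the sign $(-1)^{kd}$ versus $(-1)^{k(d-k)}$ is a bookkeeping matter that the paper does not address either, and your diagnosis that it stems from the asymmetry between the two dual exact sequences (and hence from which side of the block decomposition the $\widehat{\Gamma}$ factor sits when composing the K-orientations) is on the right track.
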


\begin{example}
\label{example:invert dual}
	This example is dual to Example~\ref{example:invert torus}. Consider the embedding 
		\begin{align*}
			\hat{\overline{\delta}} : \Zhat^d & \to \Zhat^{2d} \\
			\chi & \mapsto (\overline{\chi}, \chi).
		\end{align*}
	As in Example~\ref{example:invert torus}, the dual torus to $(\Zhat, \hat{\overline{\delta}})$ is $(\T^d, \Delta)$, where $\T^d$ is oriented in the standard way and $\Delta:\T^d\to \T^{2d}$ is the diagonal map. Thus, by Theorem~\ref{thm: MainK-HomologyComputation} we have that 
		\[[\mathcal{F}_{2d}]\hotimes_{\Zhat^{2d}}[(\Zhat^{d}, \hat{\overline{\delta}})]_* = (-1)^{\frac{d(d-1)}{2}}[(\T^d, \Delta)]_*. \qed \]
\end{example}

We conclude this section by showing that $[\mathcal{F}_d]$ is an invertible element of $\KK_{-d}(\T^d,\Zhat^d)$ by defining its inverse explicitly. 

\begin{definition}
	Define the bundle $\overline{\mathcal{P}}_d$ over $\Zhat^d\times \T^d$ given by the trivial line bundle $(\Zhat^d\times \R^d)\times \C$ modulo the relation
		\[(\chi, x, \lambda) \sim (\chi, x+n, \overline{\chi}(n)\lambda), \quad \text{for $n\in \Z$.} \]
	The \emph{dual Fourier-Mukai tansform} is the class $[\overline{\mathcal{F}_d}]\in \KK_{-d}(\Zhat^d, \T^d)$ defined by the correspondence 
		\[\Zhat^d \corl{\pr_1} (\Zhat^d\times \T^d, \overline{\mathcal{P}}_d) \corr{\pr_2} \T^d, \]
	where $\Zhat^d\times \T^d$ is given its \emph{complex structure}.
\end{definition}

\begin{theorem}
	The Fourier-Mukai correspondence $[\mathcal{F}_d]$ is invertible in $\KK_{-d}(\T^d,\Zhat^d)$. Moreover, we have that 
		\[[\overline{\mathcal{F}}_d]\hotimes_{\T^d}[\F_d]
		 = \mathbf{1}\in \KK_0(\Zhat^d,\Zhat^d)
		  \quad \text{and} \quad [\FM_d]\hotimes_{\Zhat^d}[\overline{\mathcal{F}}_d] =  \mathbf{1} \in \KK_0(\T^d,\T^d). \]
\end{theorem}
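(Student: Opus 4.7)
My plan is to compute both products $[\FM_d] \hotimes_{\Zhat^d} [\overline{\FM}_d]$ and $[\overline{\FM}_d] \hotimes_{\T^d} [\FM_d]$ directly using the intersection product, then to reduce each to the identity class via a change of coordinates in the middle manifold followed by the Bott reduction of Lemma~\ref{Lemma:BottPeriod}.

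For the first product I would use transversality (both defining projections are submersions) to write
\[
[\FM_d] \hotimes_{\Zhat^d} [\overline{\FM}_d] = \Bigl[ \T^d \corl{\pr_1} \bigl(\T^d \times \Zhat^d \times \T^d,\, \pr_{12}^* \mathcal{P}_d \hotimes \pr_{23}^* \overline{\mathcal{P}}_d\bigr) \corr{\pr_3} \T^d \Bigr].
\]
The key move is the diffeomorphism of the middle space $(z, \chi, w) \mapsto (z, \chi, u := w - z)$. Under this relabeling, the transitions of $\mathcal{P}_d$ and $\overline{\mathcal{P}}_d$ in the $z$-direction multiply to $\chi(n) \cdot \overline{\chi}(n) = 1$, so the bundle descends to the pullback $\pr_{\chi, u}^* \overline{\mathcal{P}}_d$, while the forward map becomes $(z, \chi, u) \mapsto z + u$ using the group structure on $\T^d$.

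Next I would factor the resulting correspondence as an exterior product times the group multiplication,
\[
\bigl([\mathbf{1}_{\T^d}] \times [(\Zhat^d \times \T^d, \overline{\mathcal{P}}_d) \corr{\pr_u} \T^d]\bigr) \hotimes_{\T^d \times \T^d} [\mathrm{mult}],
\]
where $[\mathrm{mult}] = [\T^d \times \T^d \corl{\id} \T^d \times \T^d \corr{+} \T^d]$. The symmetric analog of Lemma~\ref{Lemma:BottPeriod}, obtained by swapping $\T$ and $\Zhat$ and using $\overline{\mathcal{P}}_d$ in place of $\mathcal{P}_d$, reduces $[(\Zhat^d \times \T^d, \overline{\mathcal{P}}_d) \corr{\pr_u} \T^d]$ to the class $[\pnt \corr{e} \T^d]$ of the identity-point inclusion (the complex structure K-orientation on $\Zhat^d \times \T^d$ in the definition of $\overline{\FM}_d$ is chosen precisely to absorb the $(-1)^{d(d-1)/2}$-sign coming from the $\theta$-permutation, so that the reduction has no net sign). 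Composing $[\mathbf{1}_{\T^d}] \times [\pnt \corr{e} \T^d]$ with $[\mathrm{mult}]$ by intersecting at the submanifold $\{(z,z,e) : z \in \T^d\}$ of $\T^d \times \T^d \times \T^d$ then yields $\mathbf{1}_{\T^d}$. The other product is handled by the symmetric change of coordinates $(\chi, z, \eta) \mapsto (\chi, z, \psi := \eta - \chi)$ on $\Zhat^d \times \T^d \times \Zhat^d$, using the identity $\overline{\chi}(n) \eta(n) = \psi(n)$ to simplify the bundle to $\pr_{z, \psi}^* \mathcal{P}_d$, and applying Lemma~\ref{Lemma:BottPeriod} directly to obtain $\mathbf{1}_{\Zhat^d}$.

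The main technical obstacle will be the sign bookkeeping: the forward K-orientation of each composed correspondence is determined via the 2-out-of-3 lemma from the K-orientations of the two factors, and one must verify that the non-standard complex structure orientation on $\Zhat^d \times \T^d$ used in $[\overline{\FM}_d]$ precisely cancels the $(-1)^{d(d-1)/2}$-factor generated by iterating the single-variable Bott reduction. With this orientation convention in place, both compositions equal $\mathbf{1}$ exactly, establishing invertibility.
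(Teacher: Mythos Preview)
Your strategy is sound and the change of coordinates $u = w - z$ does exactly what you claim: the transitions $\chi(n)\overline{\chi}(n)$ cancel in the $z$-direction and leave $\pr_{\chi,u}^*\overline{\mathcal{P}}_d$, after which the factorisation through the multiplication correspondence and a $d$-fold Bott reduction collapses the middle space. This is a legitimate proof.

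It is, however, a genuinely different route from the paper's. The paper does \emph{not} change variables in the middle manifold. Instead, after writing out the intersection product $[\overline{\mathcal{F}}^\times_d]\hotimes_{\T^d}[\FM_d]$ as a correspondence on $\T^d\times\Zhat^d\times\Zhat^d$ with bundle transitions $(\overline{\chi}\otimes\eta)(n)$, it applies inverse spin Poincar\'e duality to recognise the result as $[(\T^d,\overline{\Delta})]_!\hotimes_{\T^{2d}}[\mathcal{F}_{2d}]$, where $\overline{\Delta}(z)=(\overline{z},z)$; then it invokes the already-established Theorem~\ref{thm: MainK-theoryComputation} (via Example~\ref{example:invert torus}) to evaluate this as $(-1)^{d+\frac{d(d-1)}{2}}[(\Zhat^d,\Delta)]_!$, and finally undoes Poincar\'e duality. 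So the paper \emph{recycles} the main sub-torus computation rather than repeating a Bott argument, which means the sign analysis is imported wholesale from Theorem~\ref{thm: MainK-theoryComputation}.

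Your approach is more elementary in the sense that it avoids the detour through $\PDspin$ and the $2d$-dimensional Fourier--Mukai, and would stand independently of Theorem~\ref{thm: MainK-theoryComputation}. The trade-off is exactly the one you flag: you must track by hand the interaction of (i) the intersection-product K-orientation on the composed forward map, (ii) the complex-structure versus product orientation on $\Zhat^d\times\T^d$, and (iii) the sign in the $\overline{\mathcal{P}}$-variant of Lemma~\ref{Lemma:BottPeriod} (note that conjugating the Poincar\'e bundle negates the Bott class, which contributes a further $(-1)^d$ you have not yet mentioned). The paper's route packages all of this into the single sign $(-1)^{k(d-k)+\frac{k(k-1)}{2}}$ of Theorem~\ref{thm: MainK-theoryComputation} with $k=d$. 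Either way the signs cancel, but be aware that your bookkeeping has one more ingredient than your outline lists.
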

\begin{proof}
	 Let $[\overline{\mathcal{F}}^\times_d]$ denote the dual Fourier-Mukai transformation where the middle manifold is K-oriented using the product orientation. Then we have that $[\overline{\mathcal{F}}^\times_d] = (-1)^{\frac{d(d-1)}{2}}[\overline{\mathcal{F}}_d]$ and so $[\overline{\mathcal{F}}_d]\hotimes_{\T^d}[\FM_d] = (-1)^{\frac{d(d-1)}{2}}[\overline{\mathcal{F}}^\times_d]\hotimes_{\T^d}[\FM_d]$.
	 
	 The intersection diagram for the composition $[\overline{\mathcal{F}}^\times_d]\hotimes_{\T^d}[\FM_d]$ is 
	 	\begin{equation*}
	 	\xymatrixrowsep{10pt}\xymatrixcolsep{15pt}\xymatrix{
	 		& & \T^d\times \Zhat^d\times \Zhat^d \ar[dl]_{\pr_{2,1}} \ar[dr]^{\pr_{1,3}} & & \\
	 		& \Zhat^d\times \T^d \ar[dl]_{\pr_1} \ar[dr]_{\pr_2}& & \T^d\times \Zhat^d \ar[dl]^{\pr_1} \ar[dr]^{\pr_2} & \\
	 		\Zhat^d & & \T^d & & \Zhat^d,
 		}
	 	\end{equation*}
	 from which one deduces that 
	 	\[[\overline{\mathcal{F}}^\times_d]\hotimes_{\T^d}[\FM_d] = (-1)^d[\Zhat^d\corl{\pr_2}(\T^d\times \Zhat^d\times \Zhat^d, \pr_{2,1}^*(\overline{\mathcal{P}}_d)\hotimes \pr_{1,3}^*(\overline{\mathcal{P}}_d)) \corr{\pr_3} \Zhat^d]. \]
	 We note that the bundle $\pr_{2,1}^*(\overline{\mathcal{P}}_d)\hotimes \pr_{1,3}^*(\overline{\mathcal{P}}_d)$ over $\T^d\times \Zhat^{2d}$ can be identified with trivial line bundle $(\R^d\times \Zhat^d\times \Zhat^d) \times \C$ modulo the relation 
	 	\[(x,\chi, \eta, \lambda) \sim (x+n,\chi, \eta, (\overline{\chi}\otimes\eta)(n)\lambda) \]
	 and from this observe that 
	 	\[{\rm PD}^{-1}_{spin}([\overline{\mathcal{F}}^\times_d]\hotimes_{\T^d}[\F_d]) = [(\T^d,\overline{\Delta})]_!\hotimes_{\T^{2d}}[\mathcal{F}_{2d}], \]
	 $\overline{\Delta}:\T^d\to \T^{2d}$ is the map defined in Example~\ref{example:invert torus}. Using the result from Example~\ref{example:invert torus} it follows that 
	 	\[{\rm PD}^{-1}_{spin}([\overline{\mathcal{F}}^\times_d]\hotimes_{\T^d}[\F_d]) = (-1)^{d+\frac{d(d-1)}{2}}[(\Zhat^d,\Delta)]_! \]
	 and so by Poincar\'{e} duality we have that 
	 	\[[\overline{\mathcal{F}}_d]\hotimes_{\T^d}[\F_d] = (-1)^{\frac{d(d-1)}{2}}\cdot[\overline{\mathcal{F}}^\times_d]\hotimes_{\T^d}[\F_d] = [\Zhat^d\corl{\id} (\Zhat^d,\mathbf{1}) \corr{\id} \Zhat^d] =  \mathbf{1}\in \KK_0(\Zhat^d,\Zhat^d). \]
	 The equation $[\F_d]\hotimes_{\Zhat^d}[\overline{\mathcal{F}}_d] = \mathbf{1} \in \KK_0(\T^d,\T^d)$ follows in exactly the same way, whence  we get the result.
\end{proof}

\section{A Geometric Description of Baum-Connes Assembly}
\label{section:Baum-Connes}
Theorem~\ref{thm: MainK-theoryComputation} leads to nice, geometric interpretation of the Baum-Connes assembly map \cite{BCH} for free abelian groups. We start by recalling the assembly map, and then the main result. 

The Baum-Connes assembly map 
	\[\mu \colon \K_*(BG) \to \K_*(C^*G)\]
for discrete groups with finite \(BG\) is defined by 
	\[\mu (f) =  \poincare_G \otimes_{C(BG)\otimes C^*G} (f\otimes 1_{C^*G})\]
where \(\poincare \in \KK_0(\C, C(BG)\otimes C^*G)\) is the class of the Mischenko element, 
the class of a finitely generated projective 
\(C(BG)\otimes C^*(G)\)-module reducing to the class of the Poincar\'e bundle \(\poincare_d\) if \(G = \Z^d\).

The Baum-Connes assembly map is a higher-index theory construction. If \(G = \Z^d\), \(BG = \T^d\), then Fourier transform identifies Baum-Connes with a map 
	\[\mu \colon \KK_*(C(\T^d), \C) \to \KK_*(\C, C(\That^d)).\]
The following observations follow from the definitions. 

1. If \(D\) is the Dirac operator on \(\T^d\) acting on \(L^2(\T^d, S)\) for an appropriate spinor bundle, then \(D\otimes 1\) acts on the Hilbert \(C(\That^d)\)-module \(L^2(S)\otimes C(\That^d)\) and can then be twisted by the Poincar\'e bundle. This results in the bundle of Dirac operators \(\{ D_\chi\}_{\chi \in \Zhat^d}\) over \(\Zhat^d\) whose families index is \(\mu ([D])\). 

2. If \(i \colon T \to \T^d\) is an oriented subtorus, the Poincar\'e bundle may be pulled back along \(i\) and the same procedure produces a bundle of Dirac operators (on \(T\)) over \(\That^d\). The families index is \(\mu ( i_*([D])\). 

The Baum-Connes assembly map is a special case of a \emph{duality} map in KK. Recall that if \(X\) is a locally compact space, then a \emph{KK-dual} for \(X\) of dimension \(d\) is a locally compact \(X'\) together with a pair of classes 
	\[\widehat{\Delta}\in \KK_d(\pnt, X\times X'), \;\;\;\;\; \Delta \in \KK_{d} (X\times X', \pnt)\]
called the unit and co-unit respectively, which satisfy the zig-zag equations 

	\begin{equation}
		(\widehat{\Delta}\hotimes 1_X) \, \hotimes_{X\times X'\times X} \, (1_X\hotimes \sigma^*\Delta) =1_X\;\;\; \; \; \sigma_*(\widehat{\Delta}\hotimes 1_{X'}) \, \hotimes_{X'\times X\times X'} \, (1_{X'} \hotimes\sigma^* \Delta) =1_{X'}.
	\end{equation}

The class \(\widehat{\Delta}\) induces a map 
	\begin{multline}
	\label{equation:PDspin}
		\PD   \colon \KK_*(X\times U, V) \to \KK_{*+d} (U, X\times V),\\
		\PD (f) :=   \widehat{\Delta} \otimes_{X\times X} (1_X\times f),
	\end{multline}
using the notation of \eqref{align:cupcapproduct}, and the class \(\Delta\) induces a map inverting it. In particular, if \(X\) and \(X'\) are dual then the K-theory of \(X\) is isomorphic to the K-homology of \(X'\), and the K-theory of \(X'\) is isomorphic to the K-homology of \(X\). 

The key example in index theory and differential topology is \emph{spin duality}, which we have 
already discussed.  The unit and co-unit are classes of the correspondences  
	\[ \Dudeltaspin := \left[ X\xrightarrow{\delta} X\times X\right],\;\;\; \Deltaspin := \left[ X\times X \xleftarrow{\delta} X \right]\]
where \(\delta\) is the diagonal map, and the maps are K-oriented by the assumed K-orientation on \(X\). In this case, \(\sigma_*(\Delta) = \Delta\), and \( \sigma^*(\widehat{\Delta}) = (-1)^d \; \widehat{\Delta}\) it follows that these two equations reduce to a single one, which is easily check by hand, using composition of correspondences by transversality. We have already noted that the resulting duality 
maps flip the legs on a correspondence.

It is possible for a given \(X\) to have than one dual in KK. But any two duals are intertwined by a KK-equivalence. 

	\begin{lemma}
	\label{lemma:ondualities1}
		Suppose \(X\) is a smooth manifold, with two duals \(X'\) and \(X''\) in \(\KK\), of dimensions \(k'\) and \(k''\). Then there is a unique \(\KK\)-equivalence 
			\[\psi \in \KK_{k'-k''} (X', X'')\]
		making the diagram 
			\begin{equation}
			\xymatrix{ 
				\KK_*(X\times U, V)\ar[dr]^{\PD''} \ar[r]^{\PD'}
				& \KK_{*+k'} (U, X'\times V) \ar[d]^{\hotimes_{X'} f}\\
				& \KK_{*+k''} (U, X''\times V) } 
			\end{equation}
commute. Moreover, \( \psi = (\PD')^{-1} (\Delta'')\). 

If  \(\psi\colon \KK_d(X', X'')\) is a \(\KK\)-equivalence, where \(X'\) is a dual for \(X\) with unit \(\Delta'\) and co-unit \(\widehat{\Delta}'\), then \(X''\) is a dual for \(X\) with co-unit \( \Delta'' := (1_X\otimes \psi ^{-1}) \otimes_{X\times X'} \Delta'\) and unit \( \widehat{\Delta}'' := \widehat{\Delta}' \otimes_{X\times X'} (1_X\otimes \psi).\)
\end{lemma}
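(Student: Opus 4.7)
The statement is an abstract ``uniqueness of duals'' result, analogous to the uniqueness of right adjoints up to canonical natural isomorphism in category theory. Accordingly, my strategy is to isolate the naturality content of the statement from the explicit construction of $\psi$, and then use that the known unit/co-unit formulas for $\PD'$ and $\PD''$ let one write $\psi$ as an explicit composition in KK. Throughout I will use that both $\PD'$ and $\PD''$ are, by assumption, isomorphisms in every bidegree.

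\emph{Uniqueness.} Since $\PD'$ is an isomorphism of functors in $(U,V)$, the equation $\PD'' = (\hotimes_{X'} \psi)\circ \PD'$ determines the natural transformation $\hotimes_{X'}\psi$ uniquely, and hence determines $\psi$ uniquely. Concretely, specializing to $U=X'$, $V=\pnt$ and evaluating on the class $1_{X'}\in \KK_0(X',X')$ shows that $\psi$ must equal the image of $1_{X'}$ under $\PD''\circ (\PD')^{-1}$. Chasing this through the definition $\PD'(f)=\widehat{\Delta}'\hotimes_{X\times X'}(1_X\times f)$ recovers the claimed formula $\psi = (\PD')^{-1}(\Delta'')$, once we interpret the right side via the adjoint version of $\PD'$ that identifies $\KK_*(X\times X'',\pnt)$ with $\KK_{*+k'}(X'',X')$.

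\emph{Existence and KK-equivalence.} Take the formula for $\psi$ above as a definition. To verify that $(\hotimes_{X'}\psi)\circ \PD' = \PD''$ it suffices, by the isomorphism property of $\PD'$ applied to an arbitrary $f$, to unpack both sides as iterated intersection products; the equality then reduces by associativity of $\hotimes$ to the identity $\psi\hotimes_{X'}\widehat{\Delta}' = \widehat{\Delta}''$ (up to leg-flips), which is the defining relation for $\psi$. To show $\psi$ is invertible, repeat the construction with the roles of $X'$ and $X''$ swapped to obtain $\psi'\in \KK_{k''-k'}(X'',X')$; the analogous commutative triangle, combined with the uniqueness in the first paragraph applied to $X''=X'$, forces $\psi\hotimes_{X''}\psi' = 1_{X'}$ and $\psi'\hotimes_{X'}\psi = 1_{X''}$.

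\emph{The second assertion.} Here we are given a KK-equivalence $\psi$ and must check that the twisted classes $\widehat{\Delta}''$ and $\Delta''$ satisfy the two zig-zag equations. I would substitute the definitions of $\widehat{\Delta}''$ and $\Delta''$ into the zig-zag identities, then collapse the two $\psi$'s against each other using $\psi\hotimes\psi^{-1}=1_{X'}$ (and similarly for the adjoint side). What remains is exactly the pair of zig-zag equations for $(\widehat{\Delta}',\Delta')$, which hold by hypothesis. The only care needed is that the $\psi$'s genuinely meet along $X'$ in the composition, which follows from associativity and functoriality of the cup-cap product \eqref{align:cupcapproduct} over the relevant auxiliary spaces.

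\emph{Main obstacle.} The conceptual content is straightforward categorical diagram chasing, so the hard part is purely bookkeeping: keeping track of the locations of the $X$, $X'$, $X''$ legs under the external products, getting the grading shifts right (with $\PD'$ shifting by $k'$ and $\PD''$ by $k''$, producing the asymmetric degree $k'-k''$ on $\psi$), and correctly handling the sign $\sigma^*\widehat{\Delta} = (-1)^d\widehat{\Delta}$ when the flip $\sigma$ intervenes in the zig-zag equations. All of this is routine once one writes a single careful diagram for the triple composition that defines $\psi\hotimes\psi^{-1}$.
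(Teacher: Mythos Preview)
Your proposal is correct and matches the paper's approach exactly: the paper's entire proof is the sentence ``The proof is routine,'' and what you have written is precisely the routine categorical argument (uniqueness by specialization, existence by symmetry, transport of the zig-zag equations along $\psi$) that this remark is gesturing at. Your identification of the only real difficulty as bookkeeping of legs, gradings, and flip-signs is also accurate.
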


The proof is routine. 


	\begin{theorem}
	\label{theorem:fmduality}
		Define 
			\begin{align}
				\Dudeltafm := [\poincare_d]\in \KK_0(\pnt\; , \T^d\times \That^d),\\
				\Deltafm :=[\dol_d \cdot \poincare_d] \in \KK_0(C(\T^d\times \Zhat^d) ,  \C).
			\end{align}
		where \(\dol_d \cdot \poincare_d\) is the Dirac-Dolbeault operator \(\partial_d\) on \(\T^d\times \That^d\), twisted by the Poincar\'e line bundle. Then \(\Deltafm\) and \(\Dudeltafm\) are the unit and co-unit (respectively) of a \(0\)-dimensional \(\KK\)-duality between \(\T^d\) and \(\That^d\). The associated duality map for any \(A\),  
			\[ \PD_{\mathrm{FM}} \colon \KK_*(C(\T^d), A) \to \KK_0(\C, A\otimes C^*(\Z^d)), \;\;\; f\mapsto [\poincare_d]\otimes_{C(\T^d)\otimes C^*(\Z^d)} (f\otimes 1_{C^*(\Z^d)}) \]
		is the Baum-Connes assembly map. 
	\end{theorem}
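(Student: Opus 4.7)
The plan is to deduce the Fourier-Mukai duality by transporting the standard spin duality on $\T^d$ along the Fourier-Mukai KK-equivalence, using Lemma~\ref{lemma:ondualities1}. Since $\T^d$ is parallelizable, it is its own $d$-dimensional KK-dual, with unit and co-unit
$$\Dudeltaspin = \left[(\T^d, \mathbf{1}) \corr{\delta} \T^d \times \T^d\right], \qquad \Deltaspin = \left[\T^d \times \T^d \corl{\delta} (\T^d, \mathbf{1})\right].$$
By the preceding theorem, $[\FM_d]$ is a KK-equivalence with two-sided inverse $[\overline{\FM}_d]$, so Lemma~\ref{lemma:ondualities1} immediately produces a $0$-dimensional duality between $\T^d$ and $\That^d$ whose unit and co-unit are
$$\widehat{\Delta}'' = \Dudeltaspin \hotimes_{\T^d \times \T^d} (1_{\T^d} \hotimes [\FM_d]), \qquad \Delta'' = (1_{\T^d} \hotimes [\overline{\FM}_d]) \hotimes_{\T^d \times \T^d} \Deltaspin.$$

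The bulk of the argument is then to identify these transported classes with $\Dudeltafm$ and $\Deltafm$ as specified. For the unit I will compute $\widehat{\Delta}''$ by transverse intersection over $\T^d \times \T^d$: the fibered product of $\delta \colon \T^d \to \T^d \times \T^d$ and $(\id, \pr_1)\colon \T^d \times \That^d \times \T^d \to \T^d \times \T^d$ collapses to $\T^d \times \That^d$, the K-theory datum pulls back to the Poincar\'e line bundle, the backward map becomes trivial and the forward map becomes the identity, yielding $\widehat{\Delta}'' = [\poincare_d] = \Dudeltafm$. A parallel transverse intersection computation expresses $\Delta''$ as a correspondence of the form $\left[\T^d \times \That^d \corl{\id} (\T^d \times \That^d, \poincare_d) \to \pnt\right]$ with the forward map K-oriented by the canonical product K-orientation on $\T^d \times \That^d$. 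Passing from geometric to analytic KK via the isomorphism recalled in Section~2 then identifies this correspondence with the K-homology class of the Dolbeault operator twisted by the Poincar\'e bundle, so $\Delta'' = [\dol_d \cdot \poincare_d] = \Deltafm$.

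The last assertion of the theorem is then essentially tautological. Once $\widehat{\Delta}'' = [\poincare_d]$ has been established, the associated duality map sends $f \in \KK_*(C(\T^d), A)$ to $[\poincare_d] \hotimes_{C(\T^d) \otimes C^*(\Z^d)} (f \otimes 1_{C^*(\Z^d)})$, which matches the defining formula of $\mu$ after identifying $[\poincare_d]$ with the Mischenko element for $\Z^d$ via the Fourier isomorphism $C^*(\Z^d) \cong C(\That^d)$. The main technical obstacle is the identification of the transported co-unit $\Delta''$ with $[\dol_d \cdot \poincare_d]$: the transverse intersection produces the right Baum-Douglas cycle, with the correct K-theory datum and K-orientation, but matching it on the nose with the specific analytic Dolbeault operator requires invoking the geometric-to-analytic KK isomorphism from \cite{EM:Geometric_KK}. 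One also has to verify that the sign/conjugation conventions implicit in writing $[\overline{\FM}_d]$ as $[\FM_d]^{-1}$ do not flip $\poincare_d$ to its dual in the final correspondence, which is a short but careful bookkeeping check.
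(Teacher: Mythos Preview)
Your proposal is correct and follows essentially the same route as the paper: transport spin duality along the Fourier--Mukai equivalence via Lemma~\ref{lemma:ondualities1}, identify the transported unit with $[\poincare_d]$ and the transported co-unit with the twisted Dolbeault class, and observe that the resulting duality map is the Baum--Connes assembly map by definition. The only cosmetic difference is that where you set up an explicit transverse intersection for $\widehat{\Delta}''$, the paper invokes the standing description of $\PDspin$ as ``leg-flipping'' on correspondences to read off $\PDspin([\FM_d]) = [\poincare_d]$ directly; for the co-unit the paper is as terse as you are, merely citing the inversion formula, so your flagged bookkeeping check (that the conjugate bundle and complex-structure K-orientation in $[\overline{\FM}_d]$ land on $[\dol_d\cdot\poincare_d]$ rather than a sign-twisted variant) is exactly the content hidden in the paper's phrase ``similar computations''.
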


We will refer to the duality of Theorem \ref{theorem:fmduality} as \emph{Baum-Connes duality}. 

	\begin{proof}[Proof of Theorem~\ref{theorem:fmduality}]
		We apply the second part of the Lemma with \(f = \FM_d\), \(X'= \T^d, X''= \That^d\). Let \(\Delta' = \Deltaspin\) and \(\widehat{\Delta} ' = \Dudeltaspin\), then the unit class \(\Dudeltaspin \otimes_{\T^d\times \T^d} (1_{\T^d}\otimes \FM_d) \) for the new duality is by the definitions equal to \(\PDspin (\FM_d)\). Since spin duality flips the legs of a correspondence this equals \( [\poincare_d]\in \KK_0(\pnt, \T^d\times \That^d)\). Thus, \(\Deltafm = [\poincare_d]\). 
 
		Similar computations show that the new co-unit is as stated using the Fourier-Mukai inversion formula. The Baum-Connes assembly map by definition is the map 
			\[ f \mapsto [\poincare_d]\otimes_{\T^d\times \That^d} (f\otimes 1_{\That^d}).\]
		and this is by definition equal to the map \(\PDfm\) of the new duality. 
	\end{proof}

We obtain the following factorization of the Baum-Connes assembly map. 

	\begin{lemma}
	\label{lemma:factoringbc}
			\[\xymatrix{  \K_*(\T^d)  \ar[r]^{\PDspin \;\;\;\;} \ar[dr]_{\mu} & \K^{*+d} (\T^d) \ar[d]^{(\FM_d)_*} \\  
		& \K^*(\That^d) }.\]
		where the map \((\FM_d)_*\) is composition with the Fourier-Mukai transform. 
	\end{lemma}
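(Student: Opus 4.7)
The plan is to deduce this factorization directly from Theorem~\ref{theorem:fmduality} combined with Lemma~\ref{lemma:ondualities1}. The first ingredient is the identification $\mu = \PDfm$ furnished by Theorem~\ref{theorem:fmduality}: after the Fourier transform $C^*(\Z^d) \cong C(\That^d)$, the Baum-Connes assembly map coincides with the duality map for Baum-Connes/Fourier-Mukai duality between $\T^d$ and $\That^d$. So it suffices to factor $\PDfm$ as $(\FM_d)_* \circ \PDspin$.

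Next, I observe that $\T^d$ carries two distinct $\KK$-dualities: spin duality (with itself, of degree $d$) and Baum-Connes/Fourier-Mukai duality (with $\That^d$, of degree $0$). The proof of Theorem~\ref{theorem:fmduality} exhibits the latter as obtained from the former by applying the second part of Lemma~\ref{lemma:ondualities1} with the $\KK$-equivalence $\psi = \FM_d \in \KK_{-d}(\T^d, \That^d)$; in particular, $\Dudeltafm = [\poincare_d]$ is obtained from $\Dudeltaspin$ by composition with $1_{\T^d} \hotimes \FM_d$, and similarly for the co-units.

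Applying the first part of Lemma~\ref{lemma:ondualities1} to this situation then gives, for every $f \in \K_*(\T^d) = \KK_*(\T^d, \pnt)$,
\[
\PDfm(f) \;=\; \PDspin(f) \hotimes_{\T^d} \FM_d \;=\; (\FM_d)_*\bigl(\PDspin(f)\bigr),
\]
which is precisely the commutativity of the stated triangle once $\mu$ is identified with $\PDfm$. The degree shifts match: $\PDspin$ raises degree by $d$, landing a class of $\K_*(\T^d)$ in $\K^{*+d}(\T^d)$, and $\FM_d$ has degree $-d$, so the composition has degree zero as the assembly map requires.

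No real obstacle is expected, since all the substantial input has been carried out in Theorem~\ref{theorem:fmduality} (establishing Baum-Connes duality by bootstrapping spin duality through $\FM_d$) and in Lemma~\ref{lemma:ondualities1} (comparing distinct $\KK$-dualities). The only technical point deserving attention is the absence of extraneous signs in the application of Lemma~\ref{lemma:ondualities1}; this is ensured by the uniqueness formula $\psi = (\PDspin)^{-1}(\Deltafm)$ from that lemma, which singles out $\FM_d$ uniquely and consistently with the normalizations already fixed in Theorem~\ref{theorem:fmduality}.
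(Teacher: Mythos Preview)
Your proposal is correct and takes essentially the same approach as the paper, just at a higher level of abstraction: you invoke Lemma~\ref{lemma:ondualities1} to conclude $\PDfm = (\FM_d)_* \circ \PDspin$ directly from the fact that Baum-Connes duality was built from spin duality via $\psi = \FM_d$, whereas the paper writes out the chain of Kasparov-product identities $(\FM_d)_*\PDspin(f) = \Dudeltaspin \otimes_{\T^d\times\T^d} (f \otimes [\FM_d]) = \cdots = \PDfm(f)$ explicitly. Since the paper declares the proof of Lemma~\ref{lemma:ondualities1} to be ``routine,'' your argument and the paper's are really the same computation packaged at two different levels.
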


	\begin{proof}
		Let \(f \in \KK_*(\T^d, \pnt)\) then 
			\begin{multline*}
				(\FM_d)_*\PDspin (f)  = \left( \Dudeltaspin \otimes_{\T^d\times \T^d} (f\otimes 1_{\T^d}) \right) \otimes_{\T^d} [\FM_d] = \Dudeltaspin \otimes_{\T^d\times \T^d} \left( (f\otimes 1_{\T^d}) \otimes_{\T^d} [\FM_d] \right) \\
				= \Dudeltaspin \otimes_{\T^d\times \T^d} (f\otimes_\C [\FM_d]) = \Dudeltaspin \otimes_{\T^d\times \T^d} ([\FM_d]\otimes_\C f) \\
				= \Dudeltaspin \otimes_{\T^d\times \T^d} \left( ([\FM_d]\otimes 1_{\T^d}) \otimes_{\That^d \times \T^d} ( 1_{\That^d} \otimes f) \right)  = (\FM_d\otimes 1_{\T^d})_*(\Dudeltaspin) \otimes_{\That^d\times \T^d} (1_{\That^d} \otimes f) \\ = \PDfm (f) = \mu (f).
 			\end{multline*}
	\end{proof}

The Lemma in combination with Theorem \ref{thm: MainK-HomologyComputation}
 gives the main result of this note. 

	\begin{corollary}
		Let \(T \subset \T^d\) be a K-oriented subtorus of dimension \(j\) defining the element \([(T,i)]_* \in \K_{j} (\T^d)\). Then 
			\[ \mu ([(T,i)]_*) =  (-1)^{k(d-k)+\frac{k(k-1)}{2}}\cdot [\widehat{(T,i)}]! \in \K^{-j} (\That^d),\]
		where \(\mu\) is the Baum-Connes assembly map. 
	\end{corollary}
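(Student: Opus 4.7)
The plan is to chain the factorization in Lemma~\ref{lemma:factoringbc} with the main computation Theorem~\ref{thm: MainK-theoryComputation}. Writing $k = j$ for the dimension of the subtorus, the assembly map sends $[(T,i)]_*$ to $(\FM_d)_*\bigl(\PDspin([(T,i)]_*)\bigr)$, so it suffices to (a) identify $\PDspin([(T,i)]_*)$ with $[(T,i)]!$ and then (b) apply Theorem~\ref{thm: MainK-theoryComputation} to compute $[(T,i)]!\hotimes_{\T^d}[\FM_d]$.

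For step (a) I would use the correspondence-level formula for $\PDspin$ given in the Poincar\'e Duality proposition, namely that $\PDspin$ simply flips the legs of a correspondence, replacing the backward map by a forward one K-oriented via the 2-out-of-3 identification $TM\oplus(b_X\times f)^*T(X\times Z)\cong b_X^*TX\oplus(TM\oplus f^*TZ)$. Applied to $[(T,i)]_*=[\T^d\corl{i}(T,\mathbf{1})]\in\KK_{-k}(\T^d,\pnt)$, this produces the class $[\pnt\leftarrow (T,\mathbf{1})\corr{i}\T^d]$ with $i$ K-oriented by the short exact sequence $0\to TT\to i^*T\T^d\to \nu_i\to 0$, using the standard product orientation on $\T^d$ and the given orientation on $T$. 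This is precisely the K-orientation used to define $[(T,i)]!\in\KK_{d-k}(\pnt,\T^d)$, so $\PDspin([(T,i)]_*)=[(T,i)]!$.

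For step (b), Theorem~\ref{thm: MainK-theoryComputation} directly yields
\[
    [(T,i)]!\hotimes_{\T^d}[\FM_d] \;=\;(-1)^{k(d-k)+\frac{k(k-1)}{2}}\,[\widehat{(T,i)}]!\in\KK_k(\pnt,\That^d).
\]
Combining this with the previous step and Lemma~\ref{lemma:factoringbc} gives the corollary. Since $\K_j(\T^d)=\KK_{-j}(\T^d,\pnt)$ and $\K^{-j}(\That^d)=\KK_j(\pnt,\That^d)$, the degrees line up correctly.

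There is no real obstacle, since the corollary is essentially a repackaging of two results already in hand. The only point requiring genuine care is checking that the K-orientation on $i$ produced by the spin-duality 2-out-of-3 formula is the same as the one used to define the shriek class $[(T,i)]!$; that verification is routine because both come from comparing the orientation on $T\T^d$ to the orientations on $TT$ and the normal bundle $\nu_i$. Once this is unpacked, the Corollary is immediate.
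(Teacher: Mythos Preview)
Your proposal is correct and follows essentially the same route as the paper: combine the factorization $\mu=(\FM_d)_*\circ\PDspin$ from Lemma~\ref{lemma:factoringbc} with the main computation of $[(T,i)]!\hotimes_{\T^d}[\FM_d]$. The paper's one-line justification actually cites Theorem~\ref{thm: MainK-HomologyComputation}, but the sign $(-1)^{k(d-k)+\frac{k(k-1)}{2}}$ in the Corollary matches Theorem~\ref{thm: MainK-theoryComputation}, so your choice of reference is the right one; your explicit check that $\PDspin([(T,i)]_*)=[(T,i)]!$ fills in a detail the paper leaves implicit.
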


As noted above, this supplies a topological description of the higher analytic index \(\mu ([T]_*)\), and so might be regarded as a higher index theorem. 

\section{Further Remarks}

Baum-Connes duality can be of course formulated for discrete groups \(\Gamma\) with finite \(B\Gamma\) with unit the finitely generated projective Mischenko-Poincar\'e module \(\poincare_\Gamma\) over \(C(G\Gamma) \otimes C^*(\Gamma)\). Constructing a \emph{candidate} for the 
unit is tantamount to applying the Dirac-dual-Dirac method in a geometric 
setting; we describe such a candidate below in the case of \(\Z^n\). But the 
zig-zag equations are a statement of the kind \(\gamma = 1\) for the group in question 
and are almost 
certainly not satisfied even by all lattices in Lie groups. For this reason, this 
kind of duality has not been studied much. Moreover, 
the equivariant version of the Baum-Connes map 
 allows coefficients in a \(\Gamma\)-C*-algebra, whereas the duality version 
 proposed above does not.

Recently, however, \cite{NPW}) studies 
Baum-Connes duality for \(\Gamma = \Z^d\)), which, as they show, holds 
equivariantly with respect to a finite group action. This leads to a 
certain relationship with Langland's duality. 

 In this article, we have 
described Baum-Connes duality in terms of correspondences and the Fourier-Mukai transform. 
The uniqueness of a co-unit in a duality with given unit implies that the 
class \(\Deltafm\) must agree with the co-unit of \cite{NPW}, which admits a quite 
different, analytic description in terms of Dirac-Schr\"odinger operators, which we now describe.

The standard construction of a \(\gamma\)-element for fundamental groups \(\Gamma = \pi_1 (M)\) of nonpositively curved manifolds is based on analysis and some geometry of \(\tilde{M}\) as follows. Let \(H\) denote the Hilbert space of \(L^2\)-differential forms on \(\tilde{M}\). It carries a natural unitary action of \(\Gamma\), and a representation of \(C(M)\) by lifting functions on \(M\) to periodic functions on \(\tilde{M}\) and then multiplication operators. Thus, \(H\) carries a representation of \(C(M)\otimes C^*(\Gamma)\). 

Let \(\derham = d+d^*\) be the de Rham operator, densely defined unbounded  operator on \(H\) with a canonical self-adjoint extension. Fix a basepoint \(o \in \tilde{M}\) and let \(X\) denote the operator of interior multiplication by the co-vector field \(df\) on \(\tilde{M}\) where \(f(x) = \rho(x,o)^2\), \(\rho\) the Riemannian distance. Then the operator 
	\[\derham + X\] 
commutes modulo bounded operators with \(C^\infty (M)\otimes \C[\Gamma]\) and determines a spectral (unbounded) cycle \( (H, \pi, \derham + X)\) for \(\KK_0( C(M)\otimes C^*(\Gamma), \C)\). The reasons are:

1. \(\derham + X\) is Fredholm, 

2. \(C^\infty(M)\) acts by periodic functions, all of whose derivatives are uniformly bounded on \(\tilde{M}\),
making the commutators \([f, \derham+X] = [f, \derham] = df\) bounded, and

 3. The \(\Gamma\) action is isometric on \(\tilde{M}\), and the non-positive curvature assumption implies that \(\gamma (X) - X\) is bounded, for all \(\gamma\in \Gamma\), because the \(\gamma\)-action moves the basepoint to \(\gamma (o)\), and the difference is bounded due to thin geodesic triangles in a nonpositively curved space. 

Specializing to \(\Gamma = \Z^d\) gives 

	\begin{theorem}
	\label{theorem:storyoftwocounits} (see \cite{NPW}.) 
		The class \(\Deltads\in \KK_0( C(\T^d)\otimes C^*(\Z^d), \C)\) of the spectral cycle 
			\[ \left( L^2\left(T^*(\R^n) \right), \pi, \derham + X\right)\] is
		the co-unit for a duality between \(C^*(\Z^d)\) and \(C(\T^d)\) with unit the class \([\poincare_d]\in \KK_0(\C, C(\T^d)\otimes C^*(\Z^d))\) of the Poincar\'e bundle. In particular, \(\Deltads = \Deltafm\). 
	\end{theorem}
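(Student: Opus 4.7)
The plan is to invoke uniqueness of a co-unit in a $\KK$-duality once the unit has been fixed, reducing the identification $\Deltads = \Deltafm$ to verifying that $([\poincare_d], \Deltads)$ satisfies the zig-zag equations. The latter verification is the main analytic content of \cite{NPW}, and here I outline how the pieces fit together.

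First, Lemma~\ref{lemma:ondualities1} furnishes the following uniqueness principle: given two co-units $\Delta'$ and $\Delta''$ which together with the \emph{same} unit $\widehat{\Delta}' = \widehat{\Delta}''$ implement $\KK$-dualities between $C(\T^d)$ and $C^*(\Z^d)$, one must have $\Delta' = \Delta''$. Indeed, the second part of the Lemma provides a $\KK$-equivalence $\psi \in \KK_0(C^*(\Z^d), C^*(\Z^d))$ satisfying $\widehat{\Delta}'' = \widehat{\Delta}' \hotimes_{C(\T^d)\otimes C^*(\Z^d)}(1_{C(\T^d)} \hotimes \psi)$; imposing $\widehat{\Delta}'' = \widehat{\Delta}'$ and using that the associated duality map is an isomorphism forces $\psi = 1$, and then the explicit formula for $\Delta''$ in the Lemma gives $\Delta'' = \Delta'$.

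By Theorem~\ref{theorem:fmduality}, $\Deltafm$ is already a co-unit for the duality with unit $\Dudeltafm = [\poincare_d]$. Thus it suffices to verify that $([\poincare_d], \Deltads)$ also satisfies the zig-zag equations. Concretely, this amounts to computing the internal Kasparov product of the finitely generated projective Mischenko module $\poincare_d$ with the unbounded spectral cycle $(L^2(\Omega^*(\R^d)), \pi, \derham + X)$, and checking that the result is $1_{C(\T^d)} \in \KK_0(C(\T^d), C(\T^d))$. This is precisely the main analytic content of \cite{NPW}: after an interpolation that scales the Schr\"odinger potential, the product is identified with the identity using a variant of Kucerovsky's criterion.

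The main technical obstacle is performing this unbounded Kasparov product rigorously. The Schr\"odinger perturbation $X$, defined via a choice of basepoint $o \in \R^d$, breaks $\Z^d$-equivariance; however, the flatness of $\R^d$ (a trivial instance of the non-positive curvature hypothesis used in the general construction of the $\gamma$-element) ensures that the commutators $[\gamma, X]$ for $\gamma \in \Z^d$ are bounded, which is exactly what allows the product with the $\Z^d$-equivariant module $\poincare_d$ to be formed. Once the product exists, its identification with the unit class reduces to Bott periodicity on $\R^d$, since the unperturbed Dirac-Schr\"odinger cycle represents the Bott class and the Poincar\'e twist provides exactly the data needed to descend this Bott class to $1_{C(\T^d)}$.
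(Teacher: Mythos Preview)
Your argument is correct, but it takes a different route from the paper's sketch. You reduce the identification $\Deltads = \Deltafm$ to the uniqueness of a co-unit once the unit is fixed (which you extract correctly from Lemma~\ref{lemma:ondualities1}), and then appeal to \cite{NPW} for the verification that $([\poincare_d],\Deltads)$ satisfies the zig-zag equations. This is logically clean, but it treats the NPW computation as a black box and leaves the equality $\Deltads=\Deltafm$ as an indirect consequence of two separate duality statements.

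The paper instead argues the equality \emph{directly at the level of cycles}, without re-running the zig-zag equations. It writes the twisted Dolbeault class as a Kasparov product, $\Deltafm = [\dol\cdot\poincare] = \delta^*([\poincare_d]\otimes 1_{C(\T^d\times\That^d)})\otimes_{C(\T^d\times\That^d)}[\dol]$, and then computes this product explicitly on Hilbert modules. The key step is an elementary but canonical unitary $L^2(\poincare_d)\cong L^2(\R^d)$ (obtained by reading off the zeroth Fourier coefficient of a section of $\poincare_d$), under which the $C(\T^d)\otimes C^*(\Z^d)$-action becomes exactly periodic multiplication and integer translation on $L^2(\R^d)$. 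After this identification the Hilbert space of the product cycle is $L^2(\Lambda^*\R^d)$, and the Kasparov product axioms pin down the operator as $\derham + X$. Thus $\Deltafm$ is \emph{represented} by the Dirac--Schr\"odinger cycle, giving $\Deltads=\Deltafm$ immediately; the duality statement for $\Deltads$ then follows from Theorem~\ref{theorem:fmduality} rather than the other way around. The paper's approach is more explanatory (it shows \emph{why} the two classes coincide, via a concrete unitary), while yours is more structural and portable to other settings where one already has two independent duality proofs.
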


That is, the class of the Dirac-Schr\"odinger cycle defined above agrees with that of the spin dual of the Fourier-Mukai transform. 

However, the equality \(\Deltads = \Deltafm\) can be proved directly using an extremely natural unitary equivalence of Hilbert spaces. We sketch the argument.

Let \(\Gamma (\poincare_d)\) be the right Hilbert \(C(\T)\otimes C^*(\Z) )\)-module of sections of the Poincar\'e bundle,
	\[ \Gamma ( \poincare_d)  := \{ f \in C\left(\R^d, C^*(\Z^d)\right) \; | \; f(x+n) = f(x) \, [n] \in \Z^d,\;  x\in \R^d\},\]
where \([n]\in C^*(\Z^d)\) is the unitary corresponding to the integer \(n\). The right \(C(\T^d)\otimes C^*(\Z^d)\)-module structure is given by 
	\[ (f\cdot \phi ) (x) = f(x) \phi (x) , \;\;\;( f\cdot [n]) (x) = f(x) \cdot [n].\]
If we set \( \langle f_1, f_2\rangle  (x)  = f_1(x)^*f_2(x)\in C^*(\Z^d)\) then \(\langle f_1, f_2\rangle\) is periodic and so is an element of \(C(\T^d)\otimes C^*(\Z^d)\).

Let \(L^2(\poincare_d)\) denote the tensor product 
	\[ \Gamma (\poincare)\otimes_{C(\T^d)\otimes C^*(\Z^d)} L^2(\T^d)\otimes C^*(\Z^d)\]
of Hilbert modules over the representation \( \pi \colon C(\T^d)\otimes C^*(\Z^d) \to \Bound \left( L^2(\T^d)\otimes l^2(\Z^d)\right)\) of multiplication operators and group translation operators. Equivalently, \(L^2(\poincare)\) is the Hilbert module completion to a Hilbert space using the trace \(\tau \colon C(\T^d)\otimes C^*(\Z^d) \to \C, \;\; \tau (\sum f_n [n]) := \int_{\T^d} f_0.\)

	\begin{lemma}
		\(L^2(\poincare_d) \cong L^2(\R^d)\) under a canonical unitary equivalence. Under this identification, the representation \(\pi\) corresponds to the integer  action of \(C^*(\Z^d)\) on \(L^2(\R^d)\) induced by the translation action of \(\Z^d\) on \(\R^d\), and the action of \(C(\T^d)\) by multiplication by \(\Z^d\)-periodic functions. 
	\end{lemma}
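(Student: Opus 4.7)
The plan is to construct the unitary by parameterizing sections of \(\poincare_d\) as ``lattice averages'' of functions on \(\R^d\), and then observing that the trace \(\tau\) reproduces the standard \(L^2(\R^d)\)-inner product via an unfolding identity. Given \(g \in C_c(\R^d)\), define
\[ f_g(x) := \sum_{n \in \Z^d} g(x-n) \, [n] \in C^*(\Z^d), \]
where the sum is finite for each \(x\) by compact support of \(g\). Because \(C^*(\Z^d)\) is abelian, a direct calculation gives \(f_g(x+m) = f_g(x) \cdot [m]\), so \(f_g \in \Gamma(\poincare_d)\). Conversely, any continuous \(f \in \Gamma(\poincare_d)\) has the form \(f = f_g\) with \(g(x)\) the \([0]\)-coefficient of \(f(x) \in C^*(\Z^d)\): the equivariance \(f(x+n) = f(x)[n]\) forces the coefficient of \([m]\) in \(f(x)\) to equal \(g(x-m)\) for a single function \(g\) on \(\R^d\). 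So \(g \leftrightarrow f_g\) is a bijection at the level of sufficiently regular sections.

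Next I would check that this correspondence is an isometry for the pre-Hilbert norm induced by \(\tau\). Expanding
\[ f_{g_1}(x)^* f_{g_2}(x) = \sum_{n,m \in \Z^d} \overline{g_1(x-n)}\, g_2(x-m) \, [m-n], \]
the \([0]\)-coefficient at \(x\) equals \(\sum_{n} \overline{g_1(x-n)} g_2(x-n)\). Integrating this \(\Z^d\)-periodic function over the fundamental domain \(\T^d\) and unfolding the sum via the tiling of \(\R^d\) by \(\Z^d\)-translates yields
\[ \tau\bigl(\langle f_{g_1}, f_{g_2}\rangle\bigr) = \int_{\R^d} \overline{g_1(y)}\, g_2(y)\, dy = \langle g_1, g_2\rangle_{L^2(\R^d)}. \]
Since \(C_c(\R^d)\) is dense in \(L^2(\R^d)\) and its image \(\{f_g\}\) is dense in \(L^2(\poincare_d)\) (continuous sections are dense, and each such section is some \(f_g\)), this extends to a unitary equivalence \(U \colon L^2(\R^d) \isomto L^2(\poincare_d)\).

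For the action comparison, one computes that for \(\Z^d\)-periodic \(\phi \in C(\T^d)\),
\[ (\phi \cdot f_g)(x) = \sum_n \phi(x) g(x-n)\, [n] = \sum_n \phi(x-n) g(x-n)\, [n] = f_{\phi g}(x), \]
using periodicity of \(\phi\); hence \(U\) intertwines the \(C(\T^d)\)-action with pointwise multiplication on \(L^2(\R^d)\). Similarly,
\[ (f_g \cdot [m])(x) = \sum_n g(x-n)\, [n+m] = \sum_k g\bigl((x-k)+m\bigr)\, [k] = f_{T_m g}(x), \]
where \((T_m g)(y) := g(y+m)\), so \([m]\) acts as translation, which is exactly the representation of \(C^*(\Z^d)\) on \(L^2(\R^d)\) induced by the \(\Z^d\)-action on \(\R^d\).

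The substance of the argument is the unfolding identity in the second step, which is essentially Fubini over the lattice. No step is technically difficult; the main bookkeeping burden is keeping straight the Pontryagin duality between \(C^*(\Z^d)\) and \(C(\That^d)\) (so that the \([0]\)-coefficient of \(f(x)\) really is a well-defined complex number) and fixing the translation sign convention so that it matches the representation of \(C^*(\Z^d)\) in the statement. A standard density argument then promotes the isometry from \(C_c(\R^d)\) to the full unitary on \(L^2\).
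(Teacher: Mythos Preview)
Your proof is correct and follows essentially the same approach as the paper: both parameterize sections by lattice averages \(g \mapsto \sum_n g(x-n)[n]\) of compactly supported functions on \(\R^d\), and both establish the isometry by extracting the \([0]\)-coefficient of \(f^*f\) and unfolding the integral over \(\T^d\) to one over \(\R^d\). The only cosmetic difference is that the paper phrases the norm computation via the vector \(\xi_0 = 1\otimes \delta_0\) in the tensor-product picture of \(L^2(\poincare_d)\), whereas you work directly with the trace \(\tau\); these are equivalent since \(\tau\) is the vector state of \(\xi_0\). Your write-up is in fact more complete than the paper's, which omits the verification that the \(C(\T^d)\)- and \(C^*(\Z^d)\)-actions are intertwined with multiplication and translation on \(L^2(\R^d)\).
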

	\begin{proof}
	If \(f\in \poincare_d\) then we expand \(f\) pointwise in its Fourier series to write \(f(x) = \sum_{n\in \Z^d}  f_n(x)  [n]\in C^*(\Z^d) \) for a family \(\{f_n\}_{n \in \Z^d}\). The condition to be in \(\poincare\) gives that \(f_{n+1} (x) = f_n(x+1)\). Hence \(f_n = n(f_0)\). So \(f\) is completely determined by its zero coefficient in this expansion. If \(\rho \in C_c(\R)\) let 
		\[\hat{\rho } (x) = \sum_{n \in \Z^d} \rho (x-n) \cdot [n].\]
	Then \(\hat{\rho}\in \Gamma(\poincare_d)\), and we have 
		\[ \hat{\rho} (x)^* \hat{\rho} (x) = \sum_{n, m\in \Z^d} \overline{\rho (x-n) }) \rho(x-m)  \cdot [n-m] .\] 
	Let \(\xi_0  = 1\otimes \delta_0 \in L^2(\T^d)\otimes l^2(\Z^d)\) and define 
		\[ U \colon C_c(\R) \to L^2(\poincare),\;\;\; U\rho = \hat{\rho} \otimes \xi_0.\] 
	Then 
		\begin{equation}
			\langle U\rho, U\rho\rangle = \langle \xi_0, \hat{\rho}^*\hat{\rho} \cdot \xi_0 \rangle = \sum_{n t\in \Z^d} \int_{\T^d} \abs{\rho (x-n)}^2 dx = \norm{\rho}^2_{L^2(\R^d)}.
		\end{equation}
\end{proof}

By Theorem \ref{theorem:fmduality}, Baum-Connes duality has co-unit 
\([\dol \cdot \poincare]\). On the other hand, the operation of  
twisting an elliptic operator \(D\) on a compact 
manifold \(X\) by a vector bundle \(E \to X\) may be expressed in 
\(\KK\)-theoretic terms as follows: 
the class \([D_E]\) of the twisted operator satisfies the equation 
\[ [D_E] = \delta^*( [E]\otimes 1_{C(X)}) \otimes_{C(X)} [D]\]
where \(\delta \colon X \to X\times X\) is the diagonal map.  Applying this to 
\(D = \dol\) and \(E = \poincare\) and computing the corresponding 
Hilbert module composition results by the above Lemma in 
a unitary equivalence with the Hilbert space 
\(H = L^2(\Lambda^*(\R^n))\). 

The axioms for a Kasparov product imply that 
the Dirac-Schr\"odinger operator \(\derham + X\), acting on \(H = 
L^2(\R^n, \Lambda^*\R^n)\) represents \([\dol \cdot \poincare]\). 

On of the interests in this is that the Dirac-Schr\"odinger cycle can be 
`quantized' to give a K-homology cycle for \(C(\T^d)\rtimes \Gamma\), for 
any lattice in \(\R^n\), not just the lattice \(C^*(\Z^d)\). This classes 
and associated index formulae 
are studied in dimension \(1\) in the paper \cite{E:Kids}. 

The paper 
\cite{Block} develops an analogue of the Fourer-Mukai 
transform for irrational tori; it is not clear to us if this work is 
related to ours.

\end{document}